    \def\MR#1{}
\theoremstyle{plain}
\newtheorem{Theorem}{Theorem}[section]
\newtheorem{Lemma}[Theorem]{Lemma}
\newtheorem{Corollary}[Theorem]{Corollary}
\newtheorem{Proposition}[Theorem]{Proposition}
\theoremstyle{definition}
\newtheorem{Assumptions and Discussion}[Theorem]{Assumptions and Discussion}
\newtheorem{Example}[Theorem]{Example}
\newtheorem{Remark}[Theorem]{Remark}
\theoremstyle{remark}
\newtheorem*{acknowledgment*}{Acknowledgment}
\newtheorem{acknowledgements}[Theorem]{Acknowledgements}
\def\deg{\operatorname{deg}}
\def\depth{\operatorname{depth}}
\def\dim{\operatorname{dim}}
\def\Gr{\operatorname{Gr}}
\def\H{\operatorname{H}} 
\def\ini{\operatorname{in}} 
\def\into{\hookrightarrow}
\def\ker{\operatorname{ker}}
\def\KK{{\mathbb K}}
\def\NN{{\mathbb N}}
\def\PP{{\mathbb P}}
\def\red{\operatorname{red}}
\def\reltype{\operatorname{reltype}} 
\def\spoly{\operatorname{spoly}}
\def\Sym{\operatorname{Sym}}
\def\ZZ{{\mathbb Z}}
\newcommand\bdalpha{{\bm \alpha}}
\newcommand\bda{{\bm a}}
\newcommand\bdbeta{{\bm \beta}}
\newcommand\bdb{{\bm b}}
\newcommand\bdc{{\bm c}}
\newcommand\bddelta{{\bm \delta}}
\newcommand\bde{{\bm e}}
\newcommand\bdgamma{{\bm \gamma}}
\newcommand\bdT{{\bm T}}
\newcommand\bdX{{\bm X}}
\newcommand\bdzero{{\bm 0}}
\newcommand\calC{\mathcal{C}}
\newcommand\calF{\mathcal{F}}
\newcommand\calH{\mathcal{H}}
\newcommand\calP{\mathcal{P}}
\newcommand\calR{\mathcal{R}}
\newcommand\frakM{\mathfrak{M}}
\newcommand\frakm{\mathfrak{m}}
\newcommand\rme{\mathrm{e}}
\newcommand\Spec{\operatorname{Spec}}
\newcommand{\Proj}{\operatorname{Proj}}
\def\reg{\operatorname{reg}}
\begin{document}

\title[On the conjecture of Vasconcelos]{On the conjecture of Vasconcelos for Artinian almost complete intersection monomial ideals}

\author[Kuei-Nuan Lin, Yi-Huang Shen]{Kuei-Nuan Lin and Yi-Huang Shen}

\thanks{AMS 2010 {\em Mathematics Subject Classification}.
    13A30, 
    13D02, 
    13H10, 
    13P10  
}

\thanks{Keyword: 
    Rees algebra, Artinian Almost complete intersection, Almost Cohen--Macaulay, Monomials
}

\address{Penn State Greater Allegheny, Department of Mathematics, McKeesport, PA, USA}
\email{kul20@psu.edu}

\address{Key Laboratory of Wu Wen-Tsun Mathematics, Chinese Academy of Sciences, School of Mathematical Sciences, University of Science and Technology of China, Hefei, Anhui, 230026, P.R.~China}
\email{yhshen@ustc.edu.cn} 

\maketitle

\begin{center}{
        {\em \small Dedicated to our advisor, Professor Bernd Ulrich,\\
            for the long-lasting advising, patience and friendship.}
    }
\end{center}

\begin{abstract}
    In this short note, we confirm a conjecture of Vasconcelos which states that the Rees algebra of any Artinian almost complete intersection monomial ideal is almost Cohen--Macaulay. 
\end{abstract}

\section{Introduction}

Let $R:=\KK[T_1,\dots,T_m]$ be a polynomial ring in the variables $T_1,\dots,T_m$ over a field $\KK$ with $m\ge 2$. In 2013, Vasconcelos \cite[Conjecture 4.15]{MR3096902} conjectured that the Rees algebra of any Artinian almost complete intersection monomial ideal in $R$ is almost Cohen--Macaulay. Recently, supporting proofs in the binary case were established in  \cite{MR3320795}, \cite{MR2013172} and \cite{MR3295062} using different techniques. With the help of Sylvester forms, similar results were also obtained for uniform ideals in \cite{MR3490460} and \cite{MR3295062}. The aim of the current note is to confirm this conjecture completely.  

In general, equi-generated almost complete intersection ideals play an important role in the elimination theory of parameterizations and their Rees algebras encapsulate some of the most common tools for that purpose. Recall that the Rees algebra of the ideal $I\subset R$ is 
\[
\calR(I)=R[IZ]:=\bigoplus_{k=0}^{\infty} I^k Z^k \subset R[Z],
\]
where $Z$ is a new variable.  As the scheme $\Proj(\calR(I))$ is the blowup of $\Spec(R)$ along $V(I)$, $\calR(I)$ plays an important role in the birational study of algebraic varieties. The study of Rees algebras is wide-open.  Even in the binary parameterizations cases, details of the Rees algebra $\calR(I)$ is still unknown in general; see for example \cite{MR2770555}. 

From our algebraic point of view, the Cohen--Macaulayness is a desirable property that a nice algebra is expected to retain.  However, the Rees algebra is typically not Cohen--Macaulay. Thus, it is very natural to seek the next to best case and the notion of almost Cohen--Macaulayness arises.  The standard graded algebra $\calR(I)$ is called \emph{almost Cohen--Macaulay} if $\depth(\calR(I))\ge \dim(\calR(I))-1$.  Ideally, many of the established tools for studying $\calR(I)$ when this algebra is Cohen--Macaulay still apply when this algebra is only almost Cohen--Macaulay; cf.~\cite{zbMATH06113058} and \cite{MR3096902}.  

To study the aforementioned conjecture of Vasconcelos, let 
\begin{equation}
I=\braket{T_1^{a_1},T_2^{a_2},\dots,T_m^{a_m},T_1^{b_1}T_2^{b_2}\cdots T_{m}^{b_{m}}} 
\label{eqn:I}
\end{equation}
be such an Artinian almost complete intersection monomial  ideal in $R=\KK[T_1,\dots,T_m]$. In the following, we will fix these integral vectors $\bda=(a_1,\dots,a_m)$ and $\bdb=(b_1,\dots,b_m)$. Without loss of generality, we may assume that $0\le b_i<a_i$ for every $i$ and there are at least two different indices $i,j$ for which $b_i\ne 0$ and $b_j\ne 0$. Since the case of $m=2$ is well understood, we will further assume that $m\ge 3$.  

To understand the Rees algebra $\calR(I)$, we can alternatively study the free presentation
\[
0\to L \into S=R[X_1,\dots,X_m,W]\xrightarrow{\Phi} \calR(I)\to 0,
\]
where the $X_1,\dots,X_m,W$ are new indeterminates over $R$, and the graded $R$-algebra homomorphism $\Phi$ is determined by 
\[
X_{i}\mapsto T_{i}^{a_i}Z \quad \text{for $1\le i\le m$, \qquad and}\quad W\mapsto T_1^{b_1}T_2^{b_2}\cdots T_m^{b_m}Z.
\] 
The prime ideal $L=\ker(\Phi)$ is known as the \emph{ideal of equations} of $I$, or the \emph{defining ideal} of the Rees algebra $\calR(I)$.  It encodes the syzygies of all powers of $I$ and is inevitably an vital algebraic tool for understanding the geometric properties of several constructions related to $I$.  It also plays a prominent role in geometry modeling community; see for example \cite{MR2394983}. And in our situation, the almost Cohen--Macaulayness of $\calR(I)$ simply means $\depth_S(S/L)\ge m$. 

As a matter of fact, to attack the almost Cohen--Macaulay conjecture of Vasconcelos, we follow some of the nice ideas in \cite{MR3490460} and \cite{MR3295062}. However, since our ultimate aim is not to give a detailed description of a minimal generating set of the defining ideal $L$, we drop the Sylvester-form approach. Instead, we merely give a profile of an infinite Gr\"obner basis of $L$ in Section 2, which is good enough for us to carry on the discussion. As for the monomial ordering in need, the special attention towards $W$ seems essential. After that, we also study a sequence of sub-ideals of $L$, and the accompanied colon ideals in Section 3. With these preparations and the standard Depth Lemma technique, we are able to confirm the almost Cohen--Macaulay conjecture of Vasconcelos at the end of this section.
In particular, we are able to cover the corresponding almost Cohen--Macaulay results in  \cite[Theorem 2.5]{MR3490460}, \cite[Proposition 4.12]{MR3096902} and \cite[Theorems 3.7 and 3.14]{MR3295062}.  
In the last section, we give further applications towards the defining ideal $L$ and the Rees algebra $\calR(I)$, when $I$ is equi-generated. The help from the established Gr\"obner basis profile of $L$ in Section 2 and the almost Cohen--Macaulayness of $\calR(I)$ in Section 3 is indispensable for that purpose. 

\section{Gr\"obner basis of the defining ideal}
As the first step, we will give an infinite Gr\"obner basis of the defining ideal $L$ in this section. Of course, to make it handy, we will reduce it to a finite subset later. At the end of this section, we will discuss when the reduced (hence finite) Gr\"obner basis of $L$ is indeed a minimal generating set.

Let $\NN$ be the set of non-negative integers $\{0,1,2,\dots\}$. For any vector $\bde\in \NN^m$, we will write $e_i$ for its $i$-th component where $1\le i\le m$, and set $|\bde|:=\sum_{i=1}^m e_i$. As usual, we will write $\bdX$ for the ordered sequence $X_1,\dots,X_m$, and $\bdT$ for the ordered sequence $T_1,\dots,T_m$. With the $\bde\in \NN^m$, $\bdX^{\bde}$ will stand for the product $X_1^{e_1}X_2^{e_2}\cdots X_m^{e_m}$ in the suitable ring. We can similarly define $\bdT^{\bde}$. 

To better express the properties of the ideal $L$, we will also use an accompanied $R$-algebra homomorphism $\Psi: S\to R$ which is determined by 
\[
X_{i}\mapsto T_{i}^{a_i} \quad \text{for $1\le i\le m$, \qquad and}\quad W\mapsto \bdT^{\bdb}:=T_1^{b_1}T_2^{b_2}\cdots T_m^{b_m}.
\] 
Meanwhile, we will write $X_{m+1}$ for $W$, and $\widetilde{\bdX}$ for the ordered sequence $X_1,\dots,X_m,W$. With the vector $\bdbeta\in \NN^{m+1}$, the notation of $\widetilde{\bdX}^{\bdbeta}$ will be understood likewise as above.

It is quite clear that the defining ideal $L$, being the kernel of the graded homomorphism $\Phi$, is graded with respect to $\widetilde{\bdX}$.  For this reason, we will write $L=\bigoplus_{i\ge 1}L_i$, with $L_i$ being the degree $i$ piece of $L$.
It is also well-known that $L$ is generated by the binomials of the form $\bdT^{\bdalpha}\widetilde{\bdX}^{\bdbeta}-\bdT^{\bdgamma}\widetilde{\bdX}^{\bddelta}$, with
$\bdalpha,\bdgamma\in \NN^m$ and $\bdbeta,\bddelta\in \NN^{m+1}$ such that
$\left| \bdbeta \right|=\left| \bddelta \right|$ and $\gcd(\bdT^{\bdalpha},\bdT^{\bdgamma})=1$ as well as $\gcd(\widetilde{\bdX}^{\bdbeta},\widetilde{\bdX}^{\bddelta})=1$; see, for instance, \cite[Corollary 4.3]{MR1363949} or \cite{MR2611561}. Indeed, the binomial $\bdT^{\bdalpha}\widetilde{\bdX}^{\bdbeta}-\bdT^{\bdgamma}\widetilde{\bdX}^{\bddelta}$ here is uniquely determined by the pair $(\widetilde{\bdX}^{\bdbeta},\widetilde{\bdX}^{\bddelta})$, since the monomials
\[
\bdT^{\bdalpha}=\frac{\Psi(\widetilde{\bdX}^{\bddelta})}{\gcd(\Psi(\widetilde{\bdX}^{\bdbeta}),\Psi(\widetilde{\bdX}^{\bddelta}))} \qquad \text{and}\qquad
\bdT^{\bdgamma}=\frac{\Psi(\widetilde{\bdX}^{\bdbeta})}{\gcd(\Psi(\widetilde{\bdX}^{\bdbeta}),\Psi(\widetilde{\bdX}^{\bddelta}))}.
\]
For later reference, we will write this binomial as $\calP(\widetilde{\bdX}^{\bdbeta},\widetilde{\bdX}^{\bddelta})$. Thus, the defining ideal $L$ is generated by the infinite set
\[
\Gamma_{-1}:=\Set{\calP(\widetilde{\bdX}^{\bdbeta},\widetilde{\bdX}^{\bddelta})|\left| \bdbeta \right|=\left| \bddelta \right|\text{ and }\gcd(\widetilde{\bdX}^{\bdbeta},\widetilde{\bdX}^{\bddelta})=1}.
\]
For the sake of an efficient investigation, we can further impose a special format on these binomials, due to the nature of the given monomial generators in $I$.

First of all, notice that the linear piece $L_1$ of the defining ideal $L$ with respect to $\widetilde{\bdX}$ is generated by 
\begin{equation}
\Gamma_0:=\Set{\calP(X_i,X_j)|1\le j<i\le m+1}. \label{eqn:linear-parts-m}
\end{equation} 
They come from the Koszul syzygies of the generators of $I$. Using the language of \cite{MR3490460}, the generators in $\Gamma_0$ with $i=m+1$ are called \emph{reduced relations} while the remaining ones are called \emph{Koszul relations}.  

Meanwhile, a Gr\"obner-basis argument will be involved later. Thus, we need to equip the polynomial ring $S=\KK[T_1,\dots,T_m,X_1,\dots,X_m,W]$ with a monomial ordering.  In the following, we will denote by $\tau$ the lexicographic order on $S$ with respect to $W>X_m>X_{m-1}>\cdots>X_1>T_1>\cdots > T_m$.  If $f$ is a polynomial in $S$, we will simply use $\ini(f)$ to denote the leading monomial of $f$ with respect to $\tau$. Furthermore, if $J$ is an ideal of $S$, then we will use $\ini(J)$ to denote the initial ideal of $J$ with respect to $\tau$.

Now, we are ready to give the first description of the defining ideal $L$.

\begin{Proposition}
	\label{prop:more-generators-m}
	The binomials in $\Gamma_0$ as well as those in
	\begin{equation}
	\Gamma_1:=\left\{\calP(W^{|\bdc|},\bdX^{\bdc})\,\Big|\, \bdzero \ne \bdc \in \NN^m\right\}
	\end{equation}
	provide an infinite Gr\"obner basis of the defining ideal $L$.
\end{Proposition}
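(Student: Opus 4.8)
The plan is to verify the Gr\"obner basis property straight from the definition, i.e.\ to prove $\ini(L)=\calN$ where $\calN:=\langle\ini(g):g\in\Gamma_0\cup\Gamma_1\rangle$, rather than running Buchberger's criterion (awkward with an infinite generating set). Since every binomial listed in $\Gamma_0$ and $\Gamma_1$ is visibly killed by $\Phi$, we have $\Gamma_0\cup\Gamma_1\subseteq L$, hence $\calN\subseteq\ini(L)$ for free; the work is the reverse inclusion. For that I would first reduce to binomials: given $f\in L$, expand it as a $\KK$-linear combination of distinct monomials and partition them according to their image under the monomial map $\Phi$; in each block the coefficients must sum to zero, so the block containing $\ini(f)$ has a second member $\mu_1$, and therefore $\ini(f)$ is already the $\tau$-leading monomial of a binomial $\mu_0-\mu_1\in L$ with $\mu_0\succ_\tau\mu_1$. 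Dividing out $\gcd(\mu_0,\mu_1)$ and using that $L$ is prime, $\mu_0-\mu_1$ becomes, up to sign and that monomial factor, exactly one of the generators $\calP(\widetilde{\bdX}^{\bdbeta},\widetilde{\bdX}^{\bddelta})$ of $\Gamma_{-1}$. Hence it suffices to prove: \emph{the $\tau$-leading monomial of every $\calP(\widetilde{\bdX}^{\bdbeta},\widetilde{\bdX}^{\bddelta})\in\Gamma_{-1}$ lies in $\calN$.}

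I would then record the relevant leading monomials, writing $(t)_+$ for $\max(t,0)$. Because $W$ is the largest variable in $\tau$, for $\bdzero\ne\bdc\in\NN^m$ with $n:=|\bdc|$ one finds $\ini(\calP(W^{n},\bdX^{\bdc}))=W^{n}\prod_{i}T_i^{(a_ic_i-nb_i)_+}$, whereas the Koszul relations have $\ini(\calP(X_i,X_j))=T_j^{a_j}X_i$ for $1\le j<i\le m$ (the reduced relations are the members of $\Gamma_1$ with $|\bdc|=1$ and add nothing new). Now fix an arbitrary $\calP(\widetilde{\bdX}^{\bdbeta},\widetilde{\bdX}^{\bddelta})\in\Gamma_{-1}$. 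Since $\gcd(\widetilde{\bdX}^{\bdbeta},\widetilde{\bdX}^{\bddelta})=1$, at most one of $\bdbeta,\bddelta$ has a nonzero $W$-coordinate, and we may assume it is $\bdbeta$; write $\beta_{m+1}$ for that coordinate and $\bdbeta',\bddelta'\in\NN^m$ for the $X$-parts of $\bdbeta,\bddelta$. The argument forks on whether $\beta_{m+1}>0$.

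If $\beta_{m+1}>0$, the monomial on the $\bdbeta$-side is the $\tau$-leader (it alone carries $W$), and computing $\bdT^{\bdalpha}$ gives $\ini(\calP(\widetilde{\bdX}^{\bdbeta},\widetilde{\bdX}^{\bddelta}))=W^{\beta_{m+1}}\bdX^{\bdbeta'}\prod_{i}T_i^{(a_i\delta_i-\beta_{m+1}b_i)_+}$. From $|\bdbeta|=|\bddelta|$ one has $|\bddelta'|=|\bdbeta'|+\beta_{m+1}\ge\beta_{m+1}\ge 1$, so there is $\bdzero\ne\bdc\le\bddelta'$ componentwise with $|\bdc|=\beta_{m+1}$; then $\calP(W^{\beta_{m+1}},\bdX^{\bdc})\in\Gamma_1$, and $c_i\le\delta_i$ forces $(a_ic_i-\beta_{m+1}b_i)_+\le(a_i\delta_i-\beta_{m+1}b_i)_+$ for every $i$, so $\ini(\calP(W^{\beta_{m+1}},\bdX^{\bdc}))$ divides our leading monomial. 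If $\beta_{m+1}=0$, both monomials involve only $X$'s and $T$'s, the binomial equals $\bigl(\prod_{i}T_i^{a_i\delta_i}\bigr)\bdX^{\bdbeta'}-\bigl(\prod_{i}T_i^{a_i\beta_i}\bigr)\bdX^{\bddelta'}$, and $\bdbeta',\bddelta'$ have disjoint supports and are both nonzero (otherwise the binomial vanishes); after possibly swapping $\bdbeta$ and $\bddelta$ we may take $i^*:=\max(\supp\bdbeta'\cup\supp\bddelta')$ to lie in $\supp\bdbeta'$, whence the $\tau$-leading monomial is $\bigl(\prod_{i}T_i^{a_i\delta_i}\bigr)\bdX^{\bdbeta'}$, and any $j\in\supp\bddelta'$ (necessarily $j<i^*$ with $\delta_j\ge 1$) yields the divisor $T_j^{a_j}X_{i^*}=\ini(\calP(X_{i^*},X_j))$. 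In both cases the leading monomial lies in $\calN$, and the proposition follows.

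The delicate point I expect is the first case: pinning down the $\tau$-leading monomial of a general $\calP(\widetilde{\bdX}^{\bdbeta},\widetilde{\bdX}^{\bddelta})$ and seeing that the $\gcd$ of the two $\Psi$-images collapses to the truncations $(a_i\delta_i-\beta_{m+1}b_i)_+$ — this is exactly where putting $W$ at the top of $\tau$ does the work, since then any $W$-bearing monomial is automatically the leader and the $X$-part of the opposite side never interferes — followed by exhibiting $\bdc$ and checking the componentwise inequality that turns the matching member of $\Gamma_1$ into a divisor. The opening reductions (to coprime binomials, i.e.\ to the $\calP$'s) are soft, using only that $L$ is toric.
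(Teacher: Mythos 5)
Your proof is correct, and in the key case it runs on a genuinely different mechanism from the one in the paper. Both arguments share the same skeleton: reduce to the coprime binomials $\calP(\widetilde{\bdX}^{\bdbeta},\widetilde{\bdX}^{\bddelta})$ of $\Gamma_{-1}$ (you re-derive this standard toric fact; the paper cites it), then split on whether $W$ occurs. But where the paper treats the $W$-bearing case by induction on $|\bdbeta|$, peeling off one Koszul relation $\calP(X_{i_0},X_{j_0})$ at a time through an explicit rewriting, and treats the $W$-free case by invoking Huneke's $d$-sequence theorem to identify $\widetilde{L}$, you instead compute the leading monomial of an arbitrary $\calP(\widetilde{\bdX}^{\bdbeta},\widetilde{\bdX}^{\bddelta})$ in closed form and exhibit an explicit divisor: a member $\calP(W^{|\bdc|},\bdX^{\bdc})$ of $\Gamma_1$ with $\bdc\le\bddelta'$ and $|\bdc|=\beta_{m+1}$ when $W$ is present, and a Koszul relation $\calP(X_{i^*},X_j)$ with $i^*=\max(\supp\bdbeta'\cup\supp\bddelta')$ when it is not. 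Your route is shorter, needs no induction and no $d$-sequence input, and correctly observes that divisibility of leading monomials alone suffices (generation of $L$ then being automatic for a Gr\"obner basis). What the paper's version buys in exchange is the explicit division-algorithm bookkeeping --- the identity writing $g$ as a multiple of a Koszul relation plus $X_{i_0}$ times a smaller binomial --- which is reused verbatim in the proof of Proposition \ref{prop:GB-subideal} and echoed in Theorem \ref{prop:smaller-generators-m}; your argument would not directly supply that later machinery. Two small expository points: your truncation formula $\max(a_i\delta_i-\beta_{m+1}b_i,0)$ for the $T$-exponents of the leading monomial silently uses $\min(\beta_i,\delta_i)=0$ for all $i\le m$, not only the statement about the $W$-coordinate that you record (both follow from $\gcd(\widetilde{\bdX}^{\bdbeta},\widetilde{\bdX}^{\bddelta})=1$, but the former is what makes the exponents collapse, rather than the placement of $W$ at the top of $\tau$, which only decides which side is the leader); and it is worth one line to note that a monomial never lies in the prime $L$, so dividing $\mu_0-\mu_1$ by $\gcd(\mu_0,\mu_1)$ indeed stays in $L$.
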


In the following, we will use repeatedly the easy fact: if $A\widetilde{\bdX}^{\bdbeta}-B\widetilde{\bdX}^{\bddelta}\in L$ with nonzero monomials $A,B\in R$, then this binomial is a multiple of $\calP(\widetilde{\bdX}^{\bdbeta},\widetilde{\bdX}^{\bddelta})$.

\begin{proof}
	[{Proof of Proposition \ref{prop:more-generators-m}}]
	Notice that one can find a Gr\"obner basis of $L$ in $\Gamma_{-1}$ and $\ini(L)=\braket{\ini(g)\mid g\in \Gamma_{-1}}$. Thus, it suffices to take an arbitrary nonzero $g:=\calP(\widetilde{\bdX}^{\bdbeta},\widetilde{\bdX}^{\bddelta})$ in $\Gamma_{-1}$, and show that 
	\begin{itemize}
		\item $g\in \braket{\Gamma_0\cup \Gamma_1}$, as well as
		\item $\ini(g)$ is divisible by the initial monomial of some binomial in $\Gamma_0\cup \Gamma_1$.
	\end{itemize}
	We have two cases.
	\begin{enumerate}[a]
		\item \label{prop:more-generators-m-a}
		If $W\nmid \widetilde{\bdX}^{\bdbeta}$ as well as $W\nmid \widetilde{\bdX}^{\bddelta}$, then $g$ belongs to the defining ideal $\widetilde{L}$ of the Rees algebra associated to the complete intersection ideal $\widetilde{I}:=\braket{T_1^{a_1},T_2^{a_2},\dots,T_{m}^{a_m}}$.  To study its presentation, we will use simply $\Phi|_{R[X_1,\dots,X_m]}$.  Since $\widetilde{I}$ is generated by a $d$-sequence, it follows from \cite{MR563225} that 
		\begin{equation}
		\widetilde{L}= \braket{\calP(X_i,X_j)\mid 1\le j<i\le  m} \subset R[X_1,X_2,\dots,X_{m}]. 
		\label{eqn:tildeL-m}
		\end{equation}
		In particular, $g\in \braket{\Gamma_0}\subset \braket{\Gamma_0\cup \Gamma_1}$. 
		
		Without loss of generality, for $g$, we may assume that $X_{i}\mid \widetilde{\bdX}^{\bdbeta}$ and $X_{j}\mid \widetilde{\bdX}^{\bddelta}$  for some $i,j$ with $1\le j<i\le m$.  Now, one can verify directly that $\ini(\calP(X_i,X_j))\,|\,\ini(g)$.
		
		\item \label{prop:more-generators-m-b}
		Therefore, in the following, we may assume that $W \,|\, \widetilde{\bdX}^{\bdbeta}$ by symmetry. To exclude the trivial case, we may require in the following that $g\notin \Gamma_1$. Thus, we can find that some $i_0,j_0$ with $1\le i_0\ne j_0\le m$ such that $X_{i_0}\mid \widetilde{\bdX}^{\bdbeta}$ and $X_{j_0}\mid \widetilde{\bdX}^{\bddelta}$.  For simplicity, we write $g=A\widetilde{\bdX}^{\bdbeta}-B\widetilde{\bdX}^{\bddelta}$. As $\gcd(\widetilde{\bdX}^{\bdbeta},\widetilde{\bdX}^{\bddelta})=1$, by a direct computation, we see that $T_{i_0}^{a_{i_0}\beta_{i_{0}}+b_{i_0}\beta_{{m+1}}}$ is a factor of $B$. As $\beta_{i_0}\ge 1$, the exponent of this factor is at least $a_{i_0}$. On the other hand, we have $\calP(X_{i_0},X_{j_0})=T_{j_0}^{a_{j_0}}X_{i_0}-T_{i_0}^{a_{i_0}}X_{j_0}\in \Gamma_0$. Thus,
		\begin{align*}
		g&= \frac{B
			\widetilde{\bdX}^{\bddelta}}{T_{i_0}^{a_{i_0}}X_{j_0}}\calP(X_{i_0},X_{j_0})+X_{i_0}\left(
		\frac{A\widetilde{\bdX}^{\bdbeta}}{X_{i_0}}-\frac{BT_{j_0}^{a_{j_0}}\widetilde{\bdX}^{\bddelta}}{T_{i_0}^{a_{i_0}}X_{j_0}} \right).
		\end{align*}
		As $g$ and $\calP(X_{i_0},X_{j_0})$ belong to the prime defining ideal $L$, so does
		\[
		\frac{A\widetilde{\bdX}^{\bdbeta}}{X_{i_0}}-\frac{BT_{j_0}^{a_{j_0}}\widetilde{\bdX}^{\bddelta}}{T_{i_0}^{a_{i_0}}X_{j_0}}.
		\]
		This implies that the above binomial is a multiple of $\calP(\widetilde{\bdX}^{\bdbeta}/{X_{i_0}},\widetilde{\bdX}^{\bddelta}/X_{j_0})$.  But by induction on $\left| \bdbeta \right|=\left| \bddelta \right|$, we see that $\calP(\widetilde{\bdX}^{\bdbeta}/{X_{i_0}},\widetilde{\bdX}^{\bddelta}/X_{j_0})\in \braket{\Gamma_0\cup \Gamma_1}$. Thus, $g\in \braket{\Gamma_0\cup \Gamma_1}$. 
		
		Meanwhile, we observe that $W$ is not involved in $\frac{B \widetilde{\bdX}^{\bddelta}}{T_{i_0}^{a_{i_0}}X_{j_0}}\calP(X_{i_0},X_{j_0})$. Thus, 
		\[
		\ini(g)=X_{i_0}\cdot \ini \left( \frac{A\widetilde{\bdX}^{\bdbeta}}{X_{i_0}}-\frac{BT_{j_0}^{a_{j_0}}\widetilde{\bdX}^{\bddelta}}{T_{i_0}^{a_{i_0}}X_{j_0}} \right),
		\]
		which in turn will be a multiple of $\ini(\calP(\widetilde{\bdX}^{\bdbeta}/{X_{i_0}},\widetilde{\bdX}^{\bddelta}/X_{j_0})$).  But again by induction, the latter is divisible by the leading monomial of some binomial in $\Gamma_0\cup \Gamma_1$. Thus, this completes our proof.
				\qedhere
	\end{enumerate}
\end{proof}

Since $\Gamma_0\cup \Gamma_1$ is an infinite Gr\"obner basis of the defining ideal $L$, we can find a finite subset $\Gamma_2$ which is still a Gr\"obner basis of $L$. From $\Gamma_2$, we can compute the reduced Gr\"obner basis $\Gamma$ of $L$. Whence, the leading terms of the binomials in $\Gamma$ are mutually non-comparable, and the tails of each binomial in $\Gamma$ cannot be reduced further by $\Gamma$. It is well-known that $\Gamma$ is uniquely determined by $\tau$. Furthermore, it is not difficult to see that $\Gamma_0\subseteq \Gamma\subseteq \Gamma_0\cup \Gamma_1$ in our situation. 

In \cite{MR3490460}, \cite{MR3320795} and \cite{MR3295062}, the reduced Gr\"obner basis of the defining ideal $L$ is explicitly constructed via moving curve or Sylvester forms techniques. And this Gr\"obner basis is shown to be a minimal generating set of $L$ as well in each case. Thus, we also want to determine when our $\Gamma$ is also a minimal generating set of $L$, although we have not written down $\Gamma$ explicitly. Without loss of generality, we may assume that $b_1\le b_2\le \cdots \le b_m$. Although we have an implicit order over the $T$'s and $X$'s with respect to $\tau$, it can be seen from the following proof that the order over the $b$'s here is not essential. 
Notice that by our assumptions in the first section, we will in turn have $b_{m-1},b_m>0$.

\begin{Proposition}
	\label{prop:min-basis}
	With the assumptions above, the reduced Gr\"obner basis $\Gamma$ is a minimal generating set of $L$ if and only if $b_{m-2}>0$.
\end{Proposition}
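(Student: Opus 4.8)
The plan is to work with the $\ZZ^m$-grading on $S$ in which $\deg T_i=\bde_i$, $\deg X_i=a_i\bde_i$, $\deg W=\bdb$; then $\Phi$ is graded, $L$ is $\ZZ^m$-graded, and each binomial $\calP(\widetilde{\bdX}^{\bdbeta},\widetilde{\bdX}^{\bddelta})$ is homogeneous. Since $\Gamma_0\subseteq\Gamma\subseteq\Gamma_0\cup\Gamma_1$, the reduced Gr\"obner basis $\Gamma$ is the disjoint union of the Koszul relations $\calP(X_i,X_j)$ $(1\le j<i\le m)$, the reduced relations $\calP(W,X_j)$ $(1\le j\le m)$, and a set $\Gamma_1'$ of binomials $\calP(W^{|\bdc|},\bdX^{\bdc})$ with $|\bdc|\ge 2$. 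Put $s:=|\supp(\bdb)|$; the standing assumptions force $s\ge 2$, and $b_{m-2}>0$ is equivalent to $s\ge 3$. As $\Gamma$ generates $L$, it is a minimal generating set if and only if, in every multidegree $\bdd$, the members of $\Gamma$ of degree $\bdd$ are $\KK$-linearly independent modulo $\mm_S L$, and I would check this one multidegree at a time.

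For the implication ``$b_{m-2}=0\Rightarrow\Gamma$ is not minimal'' (so $s=2$ and $\supp(\bdb)=\{m-1,m\}$) it suffices to exhibit a single linear syzygy. A direct expansion gives, for all $1\le j<i\le m$,
\[
\Bigl(\textstyle\prod_{k\ne i,j}T_k^{b_k}\Bigr)\,\calP(X_i,X_j)=T_i^{a_i-b_i}\,\calP(W,X_j)-T_j^{a_j-b_j}\,\calP(W,X_i).
\]
With $i=m$, $j=m-1$ and $b_k=0$ for $k\le m-2$, the left-hand side equals $\calP(X_m,X_{m-1})$, so $\calP(X_m,X_{m-1})\in\langle\Gamma\setminus\{\calP(X_m,X_{m-1})\}\rangle$; since $\calP(X_m,X_{m-1})\in\Gamma_0\subseteq\Gamma$, the set $\Gamma$ is not a minimal generating set.

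For the converse, assume $s\ge 3$. First I would record the multidegrees of the members of $\Gamma$: $\calP(X_i,X_j)$ has multidegree $a_i\bde_i+a_j\bde_j$, of support $\{i,j\}$ (size $2$); $\calP(W,X_j)$ has multidegree $a_j\bde_j+\sum_{k\ne j}b_k\bde_k$, of support $\{j\}\cup\supp(\bdb)$ (size $\ge s$); and $\calP(W^{|\bdc|},\bdX^{\bdc})$ has multidegree $\sum_i\max(a_ic_i,|\bdc|b_i)\,\bde_i$, of support containing $\supp(\bdb)$ (size $\ge s$). Consequently, when $s\ge 3$, no multidegree carries both a Koszul relation and a reduced relation or an element of $\Gamma_1'$, and distinct Koszul relations have distinct multidegrees. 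So it suffices to prove the two statements: (i) $\calP(X_i,X_j)\notin\mm_S L$ for each $1\le j<i\le m$; and (ii) in each multidegree, the members of $\Gamma$ that are reduced relations or lie in $\Gamma_1'$ are independent modulo $\mm_S L$.

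For (i): $\calP(X_i,X_j)$ has $\widetilde{\bdX}$-degree $1$, and because $L$ contains no nonzero element of $\widetilde{\bdX}$-degree $0$, the $\widetilde{\bdX}$-degree-$1$ part of $\mm_S L$ is just $\mm_R L_1$, where $L_1=\langle\Gamma_0\rangle$. In multidegree $a_i\bde_i+a_j\bde_j$ one has $\mm_R L_1=\sum_k T_k\,(L_1)_{a_i\bde_i+a_j\bde_j-\bde_k}$, and a short count shows the only monomials of the degrees that occur (for $k=i$ and $k=j$) are $T_i^{a_i-1}X_j$, $T_j^{a_j-1}X_i$, and a single $W$-monomial whose $\bde_{k_0}$-exponent equals $-b_{k_0}<0$ for any $k_0\in\supp(\bdb)\setminus\{i,j\}$ — a set that is nonempty since $s\ge 3$; hence those graded pieces of $L_1$ vanish and $\calP(X_i,X_j)\notin\mm_S L$. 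For (ii): pass to $S/(X_1,\dots,X_m)$. By Proposition~\ref{prop:more-generators-m} the image $\overline L\subseteq\KK[\bdT,W]$ is the monomial ideal generated by the $W$-monomials among $\{\ini(g):g\in\Gamma_0\cup\Gamma_1\}$; its minimal monomial generators are exactly the $\ini(g)$ with $g$ a reduced relation or $g\in\Gamma_1'$, these being pairwise incomparable leading terms of a reduced Gr\"obner basis. Such a $g$ maps to $\ini(g)$, and since distinct such $g$ have distinct leading monomials, $\mm_S L$ maps into $\mm_{\KK[\bdT,W]}\overline L$, and a nonzero $\KK$-combination of distinct minimal monomial generators of a monomial ideal never lies in $\mm$ times the ideal, the relevant elements are independent modulo $\mm_S L$. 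Together, (i) and (ii) give minimality. The main obstacle is (i): it requires pinning down precisely which monomials occupy the fine degrees $a_i\bde_i+a_j\bde_j-\bde_i$ and $a_i\bde_i+a_j\bde_j-\bde_j$ and observing that the lone competing $W$-monomial drops out exactly when $\supp(\bdb)\not\subseteq\{i,j\}$ — the hinge between the two halves of the equivalence; the remaining steps need only Proposition~\ref{prop:more-generators-m} and the incomparability of leading terms in a reduced Gr\"obner basis.
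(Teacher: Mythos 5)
Your proposal is correct, and it splits against the paper's proof as follows. For the ``only if'' direction your identity
$\bigl(\prod_{k\ne i,j}T_k^{b_k}\bigr)\calP(X_i,X_j)=T_i^{a_i-b_i}\calP(W,X_j)-T_j^{a_j-b_j}\calP(W,X_i)$
specializes at $(i,j)=(m,m-1)$ to exactly the relation the paper exhibits, so that half is the same. For the ``if'' direction the paper argues by bare divisibility: if $F=\sum_{G\ne F}k_GG$ then the monomial $\ini(F)$ must be divisible by some monomial occurring in some $G$, and this is ruled out case by case --- the crucial case being a Koszul $F$, where the only competitors of $\widetilde{\bdX}$-degree one are the reduced-relation tails $\prod_{k\ne l}T_k^{b_k}X_l$, which involve at least two distinct $T$-variables once $b_{m-2}>0$ and hence cannot divide $T_j^{a_j}X_i$. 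You instead run graded Nakayama over the fine $\ZZ^m$-grading: the supports of the multidegrees (size $2$ for Koszul relations, size $\ge 3$ for the $W$-relations when $s\ge3$) decouple the two families, the Koszul relations are handled by showing the graded pieces $(L_1)_{a_i\bde_i+a_j\bde_j-\bde_k}$ vanish, and the $W$-relations by projecting onto the monomial ideal $\overline{L}\subset\KK[\bdT,W]$ and using that distinct minimal monomial generators are independent modulo $\mm\,\overline{L}$. The combinatorial hinge is identical in both proofs (namely $\supp(\bdb)\not\subseteq\{i,j\}$), but your packaging is more structural --- in particular it shows, independently of $\bdb$, that the reduced relations and the elements of $\Gamma_1'$ in $\Gamma$ are always minimal generators, isolating the Koszul relations as the only possible failures --- while the paper's check is shorter. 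Two steps you leave implicit and should state: the vanishing of those graded pieces of $L_1$ uses that the toric ideal $L$ contains no monomials (each such piece is spanned by at most one monomial of $S$), and the final independence uses that an element of a monomial ideal has each of its monomials in that ideal. Both are standard, so there is no genuine gap.
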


\begin{proof}
    We have two cases.
	\begin{enumerate}[a]
		\item If $b_1=b_2=\cdots=b_{m-2}=0$, we claim that $\Gamma$ is not a minimal generating set of $L$. To see this, consider the following three distinct binomials in $\Gamma_0$:
		\begin{align*}
		\calP(X_{m},X_{m-1})&=T_{m-1}^{a_{m-1}}X_m-T_{m}^{a_m}X_{m-1},\\
		\calP(W,X_{m-1})&=T_{m-1}^{a_{m-1}-b_{m-1}}W-T_{m}^{b_m}X_{m-1},\\
		\calP(W,X_m)&=T_{m}^{a_{m}-b_m}W-T_{m-1}^{b_{m-1}}X_m.
		\end{align*}
		Since
		\[
		\calP(X_m,X_{m-1})=T_m^{a_{m}-b_m}\calP(W,X_{m-1})-T_{m-1}^{a_{m-1}-b_{m-1}}\calP(W,X_m),
		\]
		$\Gamma$ is not a minimal generating set.
		\item  In the remaining case, we will have $b_{m-2}>0$. Whence, at least two distinct $T$-variables will show up in each tail monomial of the reduced relations.
		
		In this situation, we claim that $\Gamma$ is a minimal generating set of $L$.  Suppose for contradiction that this is not true. Then, there exists some $F\in \Gamma$ such that $F=\sum_{G\in \Gamma\setminus\{F\}} k_G G$ with $k_G\in S$.   
		
		If $F$ is a reduced relation, then $F\in \Gamma_0\cap \Gamma_1$. Whence, we can check with ease that none of the monomials emerging in $\Gamma \setminus \{F\}$ is a factor of $\ini(F)$. Indeed, as $\ini(F)$ takes the form $AW^{k}$ for some monomial $A\in R$ and some positive integer $k$, we only need to check the leading monomial of each $G$ here. But this is clear since $\Gamma$ is a reduced Gr\"obner basis. Thus, the equality $F=\sum_{G\in \Gamma\setminus\{F\}} k_G G$ cannot happen in this subcase.  
		
		Otherwise, $F$ will be a Koszul relation. Say, $F=\calP(X_{i},X_j)$ with $1\le j<i\le m$. We can check with ease that none of the monomials emerging in $(\Gamma_0\cup \Gamma_1)\setminus \{F\}$ is a factor of $\ini(F)=T_j^{a_j}X_i$. Indeed, since $F$ is linear with respect to $\widetilde{\bdX}$, it suffices to investigate the reduced relations. As this can be verified directly, the equality $F=\sum_{G\in \Gamma\setminus\{F\}} k_G G$ still cannot happen in this subcase. 
				\qedhere
	\end{enumerate}
\end{proof}

In Section 4, we will talk about the maximal degree of the Gr\"obner basis of $L$ when $I$ is equi-generated.

\section{The almost Cohen--Macaulay conjecture of Vasconcelos}
In this section, we will confirm the conjecture of Vasconcelos (\cite[Conjecture 4.15]{MR3096902}) which states that the Rees algebra of an Artinian almost complete intersection monomial ideal $I$ is \emph{almost Cohen--Macaulay}, i.e., $\depth_S (\calR(I))\ge \dim(\calR(I))-1$. 

To achieve this goal, in the following, we give a total order $\tau'$ to $\NN^m$ as follows: for $\bdalpha,\bdbeta\in \NN^m$, $\bdalpha\prec_{\tau'}\bdbeta$ if and only if $|\bdalpha|<|\bdbeta|$, or 
\[
\text{$|\bdalpha|=|\bdbeta|$ and 
	$\alpha_{i_0}>\beta_{i_0}$ for some $i_0$ while $\alpha_i=\beta_i$ for all $i>i_0$.}
\]
Thus, with respect to  $\tau'$, the initial vector is $\bdzero=(0,0,\dots,0)$.

Considering Proposition \ref{prop:more-generators-m}, we can always find a finite subset $\calC\subset \{\bdc\in\NN^m\mid |\bdc|\ge 2\}$ and $\Gamma_2:=\{\calP(W^{|\bdc|},\bdX^{\bdc})\mid \bdc\in \calC\}$ such that the binomials in the disjoint union $\Gamma_0\sqcup \Gamma_2$ provide a Gr\"obner basis of the defining ideal $L$.  For our induction proof below, we will harmlessly assume that $\calC$ is \emph{closed} in the sense that if $\bdc'\prec_{\tau'}\bdc$ with $\bdc\in \calC$ and $|\bdc'|\ge 2$, then $\bdc'\in \calC$.

We can order the vectors in $\calC$ as $\bdc_1\prec_{\tau'}\cdots\prec_{\tau'}\bdc_N$. For
$1\le i\le N$, we will write $\bdc_i=(c_{i,1},c_{i,2},\dots,c_{i,m})\in \NN^m$.  Furthermore, for $1\le j\le N$, we set
\[
\Theta_j:=\Gamma_0\cup \{\calP(W^{|\bdc_i|},\bdX^{\bdc_i})\mid 1\le i\le j\} \quad\text{ and }\quad \calH_j:=\braket{\Theta_j}.
\]
By convention, $\Theta_0=\Gamma_0$ and correspondingly $\calH_0=L_1S$.

\begin{Proposition}
	\label{prop:GB-subideal}
	With the notation above, for $0\le j\le N$, $\Theta_j$ is a Gr\"obner  basis of the ideal $\calH_j$.
\end{Proposition}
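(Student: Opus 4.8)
The plan is to prove this by induction on $j$, using Buchberger's criterion: $\Theta_j$ is a Gröbner basis of $\calH_j$ if and only if every S-polynomial $S(f,g)$ with $f,g\in\Theta_j$ reduces to zero modulo $\Theta_j$. The base case $j=0$ is handled by Proposition~\ref{prop:more-generators-m} (applied with $\Gamma_1$ replaced by the empty contribution), or more directly: $\Theta_0=\Gamma_0$ generates the linear part $\calH_0=L_1S$, and since these are the Koszul/reduced relations of a complete-intersection-like configuration, one checks the S-polynomials among the $\calP(X_i,X_j)$ directly — this is the standard fact that the $2\times 2$ minors of a suitable $2\times(m+1)$ matrix form a Gröbner basis, or one simply verifies the handful of overlaps by hand. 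So assume $0\le j-1<N$ and $\Theta_{j-1}$ is a Gröbner basis of $\calH_{j-1}$; we must show $\Theta_j$ is a Gröbner basis of $\calH_j$.

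For the inductive step, by Buchberger's criterion it suffices to treat S-polynomials $S(f,g)$ where at least one of $f,g$ equals the newly added binomial $p_j:=\calP(W^{|\bdc_j|},\bdX^{\bdc_j})$; the pairs with $f,g\in\Theta_{j-1}$ reduce to zero modulo $\Theta_{j-1}\subseteq\Theta_j$ by the inductive hypothesis. The key structural input is the shape of the leading terms under $\tau$: since $\tau$ is the lexicographic order with $W$ largest, $\ini(p_j)=W^{|\bdc_j|}$, while $\ini(\calP(X_i,X_j))=T_j^{a_j}X_i$ involves no $W$. Consequently, for a Koszul relation $g=\calP(X_i,X_j)$ with $i,j\le m$, the leading terms $W^{|\bdc_j|}$ and $T_j^{a_j}X_i$ are coprime, so $S(p_j,g)$ reduces to zero automatically (the coprimality criterion). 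The genuine work is with the reduced relations $\calP(W,X_i)\in\Gamma_0$, whose leading term is again of the form $AW$ with $A\in R$; here $\gcd(\ini(p_j),\ini(\calP(W,X_i)))=W\cdot\gcd(\cdots)$ is nontrivial, so these overlaps must be examined, together with any overlaps among $p_j$ and the earlier $p_i=\calP(W^{|\bdc_i|},\bdX^{\bdc_i})$ for $i<j$, whose leading terms $W^{|\bdc_i|}$ and $W^{|\bdc_j|}$ always overlap.

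For these remaining S-polynomials, I would exploit the binomial structure directly rather than computing term by term: the S-polynomial of two binomials lying in the prime ideal $L$ again lies in $L$, and — cancelling the $W$-powers that created the overlap — it is a binomial (up to a monomial factor) of strictly smaller $W$-degree or strictly smaller $\widetilde{\bdX}$-degree, hence of the form $\calP(\widetilde{\bdX}^{\bdbeta},\widetilde{\bdX}^{\bddelta})$ for a pair with $|\bdbeta|=|\bddelta|$ smaller than $|\bdc_j|$, or with $W\nmid\widetilde{\bdX}^{\bdbeta}$ (or $W\nmid\widetilde{\bdX}^{\bddelta}$). In the first case, closedness of $\calC$ under $\prec_{\tau'}$ guarantees the relevant smaller vector already lies in $\calC$, so the resulting binomial reduces modulo $\Theta_{j-1}$; in the second case the binomial lies in $\braket{\Gamma_0}=\calH_0$ by the complete-intersection analysis in part~(\ref{prop:more-generators-m-a}) of the previous proof and hence reduces modulo $\Gamma_0\subseteq\Theta_{j-1}$. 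Either way $S(f,g)$ reduces to zero modulo $\Theta_j$, completing the induction.

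The main obstacle I anticipate is bookkeeping the $W^{|\bdc_i|}$-vs-$W^{|\bdc_j|}$ overlaps: one must verify that after cancelling the common $W$-power, the residual binomial is indeed $\calP$ of a $\prec_{\tau'}$-smaller pair (so that closedness of $\calC$ applies), which requires unwinding how $\Psi$ acts on $\bdX^{\bdc_i}$ versus $\bdX^{\bdc_j}$ and confirming the exponent vector that appears is genuinely dominated in the order $\tau'$. The reduced-relation overlaps $\calP(W,X_i)$ against $p_j$ are the other delicate point, since there the $X$-degree drops by one and one must land in $\Gamma_1$-type or $\Gamma_0$-type relations already accounted for; here the argument essentially mirrors case~(\ref{prop:more-generators-m-b}) of Proposition~\ref{prop:more-generators-m}, peeling off one $X_{i}$ at a time.
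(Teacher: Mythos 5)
There is a genuine gap. Your dismissal of the S-pairs between $p_j=\calP(W^{|\bdc_j|},\bdX^{\bdc_j})$ and the Koszul relations $\calP(X_{i_2},X_{i_1})$ with $i_1<i_2\le m$ rests on the false premise that $\ini(p_j)=W^{|\bdc_j|}$. In fact $\calP(W^{|\bdc|},\bdX^{\bdc})=A\,W^{|\bdc|}-B\,\bdX^{\bdc}$ with $A=\prod_i T_i^{\max(0,\,c_ia_i-|\bdc|b_i)}\in R$, so $\ini(p_j)=AW^{|\bdc_j|}$ and the $T$-part $A$ is typically nontrivial (for instance $\deg_{T_{i_1}}A=c_{i_1}a_{i_1}>0$ whenever $b_{i_1}=0$ and $c_{i_1}>0$; even the reduced relations have $\ini(\calP(W,X_i))=T_i^{a_i-b_i}W$, not $W$). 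Hence $\ini(p_j)$ and $T_{i_1}^{a_{i_1}}X_{i_2}$ need not be coprime, and Buchberger's first criterion does not dispose of these pairs. They are precisely the pairs on which the paper spends the most effort (its case (b)), splitting according to whether $X_{i_1}$ divides $\bdX^{\bdc_j}$: when it does not, the S-polynomial is rewritten as an explicit combination of the two binomials as in part \ref{prop:more-generators-m-b} of the proof of Proposition~\ref{prop:more-generators-m}; when it does, the residual binomial is $\calP(W^{|\bdbeta'|},\bdX^{\bdbeta'})$ with $|\bdbeta'|=|\bdc_j|$ but $\bdbeta'\prec_{\tau'}\bdc_j$, and one must invoke the closedness of $\calC$ at \emph{equal} total degree (smaller only in the tie-breaking part of $\tau'$) to see that this binomial already lies in $\Theta_j$. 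Your reduction scheme in the third paragraph, phrased in terms of ``strictly smaller $W$-degree or strictly smaller $\widetilde{\bdX}$-degree'', would not capture that subcase, so the gap is not repaired elsewhere in the proposal.

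The rest of your plan does line up with the paper: induction over the S-pairs, the pairs among $\Theta_{j-1}$ already settled, the $p_i$-versus-$p_j$ and reduced-relation overlaps handled by cancelling the common $W$-power, observing that the residue is again a $\calP$-binomial in $L$ of smaller $W$-degree (or $W$-free, hence in $\braket{\Gamma_0}$), and using closedness of $\calC$ to locate it in $\Theta_{j-1}$. Once the Koszul-versus-$p_j$ pairs are treated honestly as above, the argument closes.
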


\begin{proof}
	We will apply the standard technique of checking S-pairs $\spoly(F,G)$ for $F\ne G\in \Theta_j$; cf.~\cite[Section 2.3]{MR2724673}. By symmetry, we have the following three cases.
	\begin{enumerate}[a]
		\item \label{prop:GB-subideal-a}
		Assume that both $F$ and $G$ are Koszul relations. It is clear that $\spoly(F,G)\in \widetilde{L}$ in the equation \eqref{eqn:tildeL-m}. Thus, $\spoly(F,G)$ can be reduced to zero by the multivariate division algorithm relative to the set $\Gamma_0$; cf.~part \ref{prop:more-generators-m-a} of the proof of Proposition \ref{prop:more-generators-m}.		
		\item Assume that $F$ is a Koszul relation while $G$ is not. Thus, we may write $G=\calP(W^{|\bdbeta|},\bdX^{\bdbeta})$ and $F=\calP(X_{j_0},X_{i_0})$ with $1\le i_0<j_0\le m$. Whence, $\spoly(G,F)$ is the binomial $AW^{|\bdbeta|}X_{i_0}-BX_{j_0}\bdX^{\bdbeta}$ for some monomial $A,B\in R$. We have two subcases.
		
		\begin{enumerate}[i]
			\item Suppose first that $X_{i_0}$ is not a factor of $\bdX^{\bdbeta}$.
			Since obviously $X_{i_0}\mid W^{|\bdbeta|}X_{i_0}$ and $X_{j_0}\mid X_{j_0}\bdX^{\bdbeta}$, we can apply the argument as in the case \ref{prop:more-generators-m-b} in the proof of Proposition \ref{prop:more-generators-m}. Namely, $T_{i_0}^{a_{i_0}}$ is a factor of $B$ and we may express $AW^{|\bdbeta|}X_{i_0}-BX_{j_0}\bdX^{\bdbeta}$ as a friendly $S$-linear combination of $F$ and $G$:
			\begin{align*}
			AW^{|\bdbeta|}X_{i_0}-BX_{j_0}\bdX^{\bdbeta}&= \frac{B{\bdX}^{\bdbeta}}{T_{i_0}^{a_{i_0}}}   \calP(X_{i_0},X_{j_0})+X_{i_0}\left(AW^{|\bdbeta|}-\frac{BT_{j_0}^{a_{j_0}}{\bdX}^{\bdbeta}}{T_{i_0}^{a_{i_0}}}\right)\\
			&= \frac{B{\bdX}^{\bdbeta}}{T_{i_0}^{a_{i_0}}}F+DX_{i_0}G
			\end{align*}
			for suitable monomial $D\in R$. Because of the existence of $W^{|\bdbeta|}$ in $\ini(G)$,
			\[
			\ini(AW^{|\bdbeta|}X_{i_0}-BX_{j_0}\bdX^{\bdbeta})=DX_{i_0}\ini(G).
			\]
			Thus, $AW^{|\bdbeta|}X_{i_0}-BX_{j_0}\bdX^{\bdbeta}$ can be reduced by the multivariate division algorithm via $F$ and $G$ to zero. Therefore, we are done in this subcase.
			
			\item Suppose that $X_{i_0}$ is a factor of $\bdX^{\bdbeta}$. We may instead consider the binomial $AW^{|\bdbeta|}-BX_{j_0}\bdX^{\bdbeta}/X_{i_0}$. Suppose that $\bdX^{\bdbeta'}=X_{j_0}\bdX^{\bdbeta}/X_{i_0}$. Then $AW^{|\bdbeta|}-BX_{j_0}\bdX^{\bdbeta}/X_{i_0}$ is simply a multiple of $G'=\calP(W^{|\bdbeta'|},\bdX^{\bdbeta'})$. As $|\bdbeta|=|\bdbeta'|$ while $\bdbeta'\prec_{\tau'} \bdbeta$, $G'\in \Theta_j$. Therefore, we are done in this subcase.
		\end{enumerate}
		
		\item Assume that neither $F$ nor $G$ is a Koszul relation. Thus, we will write $F=\calP(W^{|\bdbeta|},\bdX^{\bdbeta})$ and $G=\calP(W^{|\bddelta|},\bdX^{\bddelta})$ for some $\bdbeta\precneq \bddelta\preceq \bdc_j$ with respect to  $\tau'$. If $|\bdbeta|=|\bddelta|$, then $\spoly(F,G)\in \widetilde{L}$. Thus, as in the case \ref{prop:GB-subideal-a}, we are done. Therefore, we may assume that $|\bdbeta|<|\bddelta|$. Whence, $\spoly(F,G)$ is a multiple of $\calP(W^{|\bddelta|-|\bdbeta|}\bdX^{\bdbeta'},\bdX^{\bddelta'})$ for
		\[
		\bdX^{\bdbeta'}= \bdX^{\bdbeta}/\gcd(\bdX^{\bdbeta},\bdX^{\bddelta}) \quad \text{and} \quad
		\bdX^{\bddelta'}= \bdX^{\bddelta}/\gcd(\bdX^{\bdbeta},\bdX^{\bddelta}).
		\]
		Being in the defining ideal $L$, the binomial $\calP(W^{|\bddelta|-|\bdbeta|}\bdX^{\bdbeta'},\bdX^{\bddelta'})$ can be reduced to zero by the multivariate division algorithm relative to the set $\Gamma_0\cup \Gamma_1$; cf.~Proposition \ref{prop:more-generators-m}. Notice that our $\calC$ is closed. By checking the degree with respect to $W$, we can expect that each emerging binomial is either in $\Gamma_0$ or is $\calP(W^{|\bdc_i|},\bdX^{\bdc_i})$ for some $\bdc_i$ with $|\bdc_i|\le |\bddelta|-|\bdbeta|$. In particular, these $i$'s will be strictly less than the given $j$. Thus, we are done in this last case.  
				\qedhere
	\end{enumerate}
\end{proof}

Recall that an ideal $J$ of $S$ is called \emph{extended} with respect to the inclusion $R\subset S$ if $J=(J\cap R)S$.

\begin{Proposition}
	\label{prop:colon-ideals-are-extended}
	With the notation above, for $1\le j\le N$, the colon ideal
	\begin{equation}
	\ini (\calH_{j-1}):\ini(\calP(W^{|\bdc_j|},\bdX^{\bdc_j}))
	\label{colon-ideal}
	\end{equation}
	can be extended from $R$.
\end{Proposition}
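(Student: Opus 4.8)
The plan is to compute the monomial ideal in \eqref{colon-ideal} outright. By Proposition \ref{prop:GB-subideal}, $\Theta_{j-1}$ is a Gr\"obner basis of $\calH_{j-1}$, so $\ini(\calH_{j-1})=\braket{\ini(g)\mid g\in\Theta_{j-1}}$, and the first step is to read off the three kinds of leading monomials occurring there, directly from the order $\tau$ (for which $W$ is the largest variable). The Koszul relation $\calP(X_p,X_q)$ with $1\le q<p\le m$ has $\ini=T_q^{a_q}X_p$; the reduced relation $\calP(X_{m+1},X_q)=\calP(W,X_q)$ with $1\le q\le m$ has $\ini=T_q^{a_q-b_q}W$; and each $\calP(W^{n_i},\bdX^{\bdc_i})$ with $1\le i\le j-1$ --- writing $n_i:=|\bdc_i|\ge 2$ --- has $\ini=M_iW^{n_i}$ for a monomial $M_i\in R$, because the $W$-bearing term of that binomial has $W$-degree $n_i\ge 1$ while the $\bdX^{\bdc_i}$-term has $W$-degree $0$. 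For the same reason $\ini(\calP(W^{n_j},\bdX^{\bdc_j}))=M_jW^{n_j}$ with $M_j\in R$ a monomial and $n_j:=|\bdc_j|\ge 2$; the key features of this divisor are that it carries no $X$-variable and that its $W$-degree $n_j$ is positive.

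The second step is to apply the elementary rule $\braket{\mu}:\nu=\braket{\mu/\gcd(\mu,\nu)}$ for monomials, one generator of $\ini(\calH_{j-1})$ at a time. From the reduced relation $\calP(W,X_q)$ one gets $T_q^{a_q-b_q}W/\gcd(T_q^{a_q-b_q}W,\,M_jW^{n_j})$; since $n_j\ge 1$ the single factor $W$ cancels and this is the $R$-monomial $T_q^{\max(0,\,a_q-b_q-v_q)}$, where $v_q$ denotes the $T_q$-exponent of $M_j$. From $M_iW^{n_i}$ with $i\le j-1$ one gets $M_iW^{n_i}/\gcd(M_iW^{n_i},\,M_jW^{n_j})$; here $\bdc_i\prec_{\tau'}\bdc_j$ forces $n_i\le n_j$ (since $\prec_{\tau'}$ refines the total-degree order), so the factor $W^{n_i}$ cancels entirely and the quotient is $M_i/\gcd(M_i,M_j)\in R$. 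Finally, from the Koszul relation $\calP(X_p,X_q)$ one gets $T_q^{\max(0,\,a_q-v_q)}X_p$, which does still involve $X_p$ --- but it is redundant: since $b_q\ge 0$ we have $\max(0,a_q-b_q-v_q)\le\max(0,a_q-v_q)$, so this Koszul contribution is a multiple of the reduced-relation contribution $T_q^{\max(0,\,a_q-b_q-v_q)}$, which is already among the generators. Discarding the redundant Koszul generators, \eqref{colon-ideal} is generated by monomials lying in $R$, and therefore (taking the contraction to $R$ back up to $S$) it is extended from $R$.

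There is no deep obstacle here; the argument hinges on two elementary observations. First, the divisor $M_jW^{n_j}$ has $W$-degree at least that of every $W$-bearing generator of $\ini(\calH_{j-1})$ --- this is precisely where the choice of the refinement $\tau'$ (and the closedness of $\calC$) enters, guaranteeing $n_i\le n_j$ for $i<j$ --- so all of those colon generators come out $W$-free and hence inside $R$. Second, the trivial inequality $b_q\ge 0$ is what makes each $X$-bearing (Koszul) colon generator disappear behind a reduced-relation colon generator. Accordingly, the only point that needs to be written out with care is the list of leading monomials of $\Theta_{j-1}$ under $\tau$; after that the computation is the routine colon-of-a-monomial-ideal calculation.
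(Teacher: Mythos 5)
Your proposal is correct and follows essentially the same route as the paper's proof: decompose the colon ideal generator by generator using the leading monomials from Proposition \ref{prop:GB-subideal}, observe that every $W$-bearing generator of $\ini(\calH_{j-1})$ has $W$-degree at most $|\bdc_j|$ (so its colon contribution lands in $R$), and show that each Koszul contribution $T_q^{\max(0,a_q-v_q)}X_p$ is absorbed by the corresponding reduced-relation contribution $T_q^{\max(0,a_q-b_q-v_q)}$. The only cosmetic difference is that you compute the exponents explicitly where the paper argues via the containment $\braket{\ini(\calP(X_{i_2},X_{i_1}))}:\nu\subseteq\braket{\ini(\calP(X_{i_2},X_{i_1}))W}:\nu$ and a divisibility of leading terms.
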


\begin{proof}
	By Proposition \ref{prop:GB-subideal}, it suffices to consider the following three types of simpler colon ideals.
	\begin{enumerate}[a]
		\item For $1\le i\le j-1$, consider the colon ideal 
		\begin{equation}
		\braket{\ini(\calP(W^{|\bdc_i|},\bdX^{\bdc_i}))}:
		\braket{\ini(\calP(W^{|\bdc_j|},\bdX^{\bdc_j}))}.
		\label{colon-1}
		\end{equation}
		Notice that the emerging two leading monomials take the form	$AW^{|\bdc_i|}$ and $A'W^{|\bdc_j|}$ respectively with $A,A'\in R$. As $|\bdc_i|\le |\bdc_j|$, the colon ideal in \eqref{colon-1} is extended.
		
    \item For $1\le i\le m$, consider the colon ideal
        \begin{equation}
            \braket{\ini(\calP(W,X_i))}:
            \braket{\ini(\calP(W^{|\bdc_j|},\bdX^{\bdc_j}))}.
            \label{colon-2}
        \end{equation}
        Notice that the emerging two leading monomials take the form	$AW$ and $A'W^{|\bdc_j|}$ respectively with $A,A'\in R$. As $1< |\bdc_j|$, the colon ideal in \eqref{colon-2} is extended.
    \item For $1\le i_1<i_2\le m$, consider the colon ideal 
        \begin{equation}
            \braket{\ini(\calP(X_{i_2},X_{i_1}))}:\braket{\ini(\calP(W^{|\bdc_j|},\bdX^{\bdc_j}))}.
            \label{colon-3}
        \end{equation}
        Since $W\mid \ini(\calP(W^{|\bdc_j|},\bdX^{\bdc_j}))$ and $W\nmid \ini(\calP(X_{i_2},X_{i_1}))$, we have
        \[
            \braket{\ini(\calP(X_{i_2},X_{i_1}))}: \braket{\ini(\calP(W^{|\bdc_j|},\bdX^{\bdc_j}))}
            \subseteq
            \braket{\ini(\calP(X_{i_2},X_{i_1}))W}: \braket{\ini(\calP(W^{|\bdc_j|},\bdX^{\bdc_j}))}.
        \]
        Then, since $\ini(\calP(X_{i_2},X_{i_1}))W=T_{i_1}^{a_{i_1}}X_{i_2}W\in \braket{\ini(\calP(W,X_{i_1}))}$, the colon ideal in \eqref{colon-3} is a subideal of the colon ideal in \eqref{colon-2} with $i=i_1$.  Thus, this case is harmless.
\end{enumerate}
From what we have found above, we can conclude that colon ideal in \eqref{colon-ideal} equals
\[
    \sum_{i=1}^{j-1} \braket{\ini(\calP(W^{|\bdc_i|},\bdX^{\bdc_i}))}:
    \braket{\ini(\calP(W^{|\bdc_j|},\bdX^{\bdc_j}))}
    +
    \sum_{i=1}^{m} \braket{\ini(\calP(W,X_i))}:
    \braket{\ini(\calP(W^{|\bdc_j|},\bdX^{\bdc_j}))},
\]
which is extended with respect to the inclusion $R\subset S$.
\end{proof}

Here is the third piece of our attack of the Vasconcelos' conjecture. 
This piece is one of the key ingredients in the proofs of \cite[Theorem 2.5]{MR3490460} and \cite[Theorem 3.14]{MR3295062}. 

\begin{Lemma}
    \label{rmk:key-points}
    Let $\calH$ be an ideal generated by elements $f_1,\dots,f_r$ in $S=R[X_1,\dots,X_m,W]$ for $m=\dim(R)$. If the colon ideal $\braket{f_1,\dots,f_{r-1}}:f_r$ is extended with respect to $R\subset S$ and 
    \[
        \depth_S(S/\braket{f_1,\dots,f_{r-1}})\ge m,
    \]
    then $\depth_S(S/\calH)\ge m$.  
\end{Lemma}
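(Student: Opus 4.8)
The plan is to use the short exact sequence relating the three modules $S/\braket{f_1,\dots,f_{r-1}}$, $S/\calH$, and a cyclic module annihilated by $f_r$, and then to feed the Depth Lemma with the hypothesis on the colon ideal. Concretely, consider the exact sequence
\[
0\to \frac{S}{\braket{f_1,\dots,f_{r-1}}:f_r}\xrightarrow{\;\cdot f_r\;} \frac{S}{\braket{f_1,\dots,f_{r-1}}}\to \frac{S}{\calH}\to 0,
\]
where the first map is multiplication by $f_r$ (well-defined and injective by the very definition of the colon ideal). By the Depth Lemma applied to this sequence, $\depth_S(S/\calH)\ge \min\{\depth_S(S/\braket{f_1,\dots,f_{r-1}}),\,\depth_S(S/(\braket{f_1,\dots,f_{r-1}}:f_r))-1\}$. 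The middle term is $\ge m$ by hypothesis, so it remains to show that $\depth_S(S/(\braket{f_1,\dots,f_{r-1}}:f_r))\ge m+1$.

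This is where the hypothesis that the colon ideal $\fraka S$, with $\fraka:=\braket{f_1,\dots,f_{r-1}}:f_r$ being extended from $R$ (so $\fraka S=(\fraka\cap R)S$), does the work. Write $\frakc:=\fraka\cap R\subseteq R$. Then $S/\fraka S\cong (R/\frakc)[X_1,\dots,X_m,W]$, a polynomial ring in $m+1$ variables over $R/\frakc$. Hence $\depth_S(S/\fraka S)=\depth_{R/\frakc}(R/\frakc)+(m+1)\ge m+1$, since depth is nonnegative (indeed, if $R/\frakc=0$ the module is zero and its depth is $+\infty$, which only helps). Plugging this into the Depth Lemma bound above gives $\depth_S(S/\calH)\ge \min\{m,\,(m+1)-1\}=m$, as desired.

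I do not expect a serious obstacle here; the statement is essentially a packaging of the Depth Lemma together with the elementary observation that the depth of a polynomial extension ring over $R/\frakc$ is at least the number of variables. The one point that needs a word of care is the edge case $\fraka=S$ (equivalently $\frakc=R$), i.e.\ $f_r\in\braket{f_1,\dots,f_{r-1}}$: then $S/\fraka S=0$, the first term of the exact sequence vanishes, the map $\cdot f_r$ is the zero map, and $S/\calH=S/\braket{f_1,\dots,f_{r-1}}$, so the conclusion is immediate. In all other cases $R/\frakc$ is a nonzero ring and $\depth_{R/\frakc}(R/\frakc)\ge 0$, so the bound $\depth_S(S/\fraka S)\ge m+1$ holds; one should perhaps remark that the grading here is the standard one in $X_1,\dots,X_m,W$ over the local (or $*$-local) ring $R$, so that the depth statements make sense with respect to the relevant maximal ideal, but this matches the conventions already in force in the paper.
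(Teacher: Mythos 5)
Your proof is correct and takes essentially the same route as the paper: the identical short exact sequence, the Depth Lemma, and the bound $\depth_S(S/(\fraka S))\ge m+1$ obtained from the fact that $S/(\fraka S)\cong (R/(\fraka\cap R))[X_1,\dots,X_m,W]$, which is exactly what the paper calls the ``flat-base-change technique.'' Your explicit handling of the degenerate case $\fraka=S$ is a harmless extra detail not spelled out in the paper.
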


\begin{proof}
    Notice that $\dim(S)=2m+1$. If the colon ideal is extended, then 
    \[
        \depth_S (S/\braket{f_1,\dots,f_{r-1}}:f_r)\ge m+1
    \]
    by the flat-base-change technique.  After that, we can apply the well-known Depth Lemma to the short exact sequence
    \begin{equation}
        0\to S/\braket{f_1,\dots,f_{r-1}}:f_r \xrightarrow{\cdot f_r} S/\braket{f_1,\dots,f_{r-1}} \to S/\calH \to 0
        \label{eqn:SES}
    \end{equation}
    to get the desired estimate.
\end{proof}

Now, it is time to state the last main piece of our intended proof.  It will serve as the base case of the induction argument.

\begin{Proposition}
    \label{prop:InitSymmAlgIdealIsCM}
    With the notation as before, the initial ideal $\ini(\calH_0)$ in $S=R[X_1,\dots,X_m,W]$ satisfies $\depth(S/\ini(\calH_0))\ge m+1$.
\end{Proposition}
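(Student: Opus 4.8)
The plan is to work entirely with the monomial ideal $\ini(\calH_0)$. By Proposition~\ref{prop:GB-subideal} applied with $j=0$, the set $\Gamma_0$ is already a Gr\"obner basis of $\calH_0$, so $\ini(\calH_0)$ is generated by the leading terms (with respect to $\tau$) of the binomials in $\Gamma_0$. Here the fact that $W$ is the largest variable is what matters: one reads off that $\ini(\calP(X_i,X_j))=T_j^{a_j}X_i$ for $1\le j<i\le m$, and $\ini(\calP(W,X_j))=T_j^{a_j-b_j}W$ for $1\le j\le m$. Hence
\[
\ini(\calH_0)=M:=M_1+M_2,\qquad M_1:=\braket{T_j^{a_j}X_i\mid 1\le j<i\le m},\quad M_2:=\braket{T_j^{a_j-b_j}W\mid 1\le j\le m},
\]
and it suffices to prove $\depth_S(S/M)\ge m+1$.

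The heart of the argument is the short exact sequence obtained by separating $W$:
\[
0\longrightarrow S/(M:W)\xrightarrow{\,\cdot W\,}S/M\longrightarrow S/(M+\braket{W})\longrightarrow 0 .
\]
Since $a_j>b_j$, each generator $T_j^{a_j}X_i$ of $M_1$ is divisible by $T_j^{a_j-b_j}$, so $M_1\subseteq\braket{T_1^{a_1-b_1},\dots,T_m^{a_m-b_m}}$; a one-line monomial computation then gives the key identity $M:W=\braket{T_1^{a_1-b_1},\dots,T_m^{a_m-b_m}}$, which is a complete intersection inside $\KK[T_1,\dots,T_m]$. Consequently $S/(M:W)$ is a polynomial ring in $X_1,\dots,X_m,W$ over the Artinian complete intersection $\KK[T_1,\dots,T_m]/\braket{T_1^{a_1-b_1},\dots,T_m^{a_m-b_m}}$, so $\depth_S(S/(M:W))=m+1$. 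On the other end, $M+\braket{W}=\braket{W}+M_1$, and $M_1$ involves only $T_1,\dots,T_{m-1},X_2,\dots,X_m$; so $S/(M+\braket{W})\cong\KK[T_m,X_1]\otimes_\KK\bigl(\KK[T_1,\dots,T_{m-1},X_2,\dots,X_m]/M_1\bigr)$. By the Depth Lemma applied to the displayed sequence, the desired bound reduces to showing that the quotient of the $(2m-2)$-variable ring $\KK[T_1,\dots,T_{m-1},X_2,\dots,X_m]$ by $M_1$ has depth at least $m-1$.

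I would deduce this from the following claim, proved by induction on $r\ge1$: the monomial ideal $N_r:=\braket{T_j^{a_j}X_i\mid 1\le j<i\le r+1}$ in $B_r:=\KK[T_1,\dots,T_r,X_2,\dots,X_{r+1}]$ satisfies $\depth(B_r/N_r)\ge r$ — the case $r=m-1$ being exactly what is needed. The base case $r=1$ is the hypersurface $\KK[T_1,X_2]/\braket{T_1^{a_1}X_2}$. For the inductive step, separate $X_{r+1}$: one checks that $N_r:X_{r+1}=\braket{T_1^{a_1},\dots,T_r^{a_r}}$ is a complete intersection, so $B_r/(N_r:X_{r+1})$ has depth $r$; and that $N_r+\braket{X_{r+1}}=\braket{X_{r+1}}+N_{r-1}$, so $B_r/(N_r+\braket{X_{r+1}})\cong\KK[T_r]\otimes_\KK(B_{r-1}/N_{r-1})$ has depth $\ge 1+(r-1)=r$ by the inductive hypothesis; the Depth Lemma then yields $\depth(B_r/N_r)\ge r$. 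Feeding $r=m-1$ back into the previous paragraph gives $\depth_S(S/(M+\braket{W}))\ge 2+(m-1)=m+1$, hence $\depth_S(S/M)\ge m+1$, as wanted.

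The one step carrying real weight — and the one I would check most carefully — is the identity $M:W=\braket{T_1^{a_1-b_1},\dots,T_m^{a_m-b_m}}$: it is precisely the collapse of all the mixed Koszul leading terms $T_j^{a_j}X_i$ into redundancy after colon by $W$ that forces \emph{both} ends of the central short exact sequence to have depth $m+1$, and it is here that placing $W$ at the top of the order $\tau$ pays off. The remaining ingredients — the two colon/sum identities for monomial ideals, the tensor-decomposition depth computations, and the one-variable induction for $N_r$ — are routine.
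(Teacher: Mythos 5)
Your proof is correct, and it takes a genuinely different route from the paper's. The paper fixes a linear order $\prec_s$ on all $\binom{m}{2}+m$ monomial generators of $\ini(\calH_0)$, adds them one at a time, and at each step computes the colon ideal of the previously accumulated ideal by the new generator (these colons are complete intersections in the Koszul range, but in the $W$-range they contain a second block of mixed monomials that must be recognized as a smaller copy of the same problem, forcing a recursive appeal to the earlier case). You instead filter by \emph{variables}: first colon by $W$, which collapses everything to the complete intersection $\braket{T_1^{a_1-b_1},\dots,T_m^{a_m-b_m}}$ — your identity $M:W=\braket{T_j^{a_j-b_j}}$ is correct precisely because $a_j>b_j$ makes every $T_j^{a_j}X_i$ redundant — and then handle the Koszul block $M_1$ by successively coloning by $X_{r+1}$, where again each colon $N_r:X_{r+1}=\braket{T_1^{a_1},\dots,T_r^{a_r}}$ is a complete intersection. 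The payoff of your decomposition is that every colon ideal appearing in the induction is a complete intersection of pure powers of variables, so all depth counts are immediate, and you need only $m$ applications of the Depth Lemma rather than one per generator; the paper's generator-by-generator scheme is heavier but has the advantage of being the same mechanism (Lemma \ref{rmk:key-points}) reused verbatim in the proof of Theorem \ref{thm:ACM}, so the whole argument is uniform. Both arguments establish the same bound $\depth(S/\ini(\calH_0))\ge m+1$, and yours in fact also recovers the intermediate estimate $\depth(S/M_1S)\ge m+2$ implicit in the paper's case (a).
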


\begin{proof}
    We have seen in Proposition \ref{prop:GB-subideal} that $\Gamma_0$ provides a Gr\"obner basis of $\calH_0$. Thus, $\ini(\calH_0)=\braket{\ini(\calP(X_i,X_j))\mid 1\le j<i\le m+1}$. Regarding these generator, indexed by the tuple $(i,j)$, we implicit an ordering $\prec_s$ for the following induction argument:  
    \begin{align*}
        (2,1)
        \prec_s (3,1)
        \prec_s \cdots
        \prec_s (m,1)
        \prec_s (3,2)
        \prec_s (4,2)
        \prec_s \cdots
        \prec_s (m,2)
        \prec_s \cdots 
        \prec_s (m-1,m-2)\\
        \prec_s (m,m-2)
        \prec_s (m,m-1)
        \prec_s (m+1,1)
        \prec_s (m+1,2)
        \prec_s \cdots
        \prec_s (m+1,m)
    \end{align*}
    For each $(i_0,j_0)$ with $1\le j_0<i_0\le m+1$, define
    \[
        K_{i_0,j_0}\coloneqq \braket{\ini(\calP(X_i,X_j))\mid (i,j)\preceq_s (i_0,j_0)} \subset S
    \]
    and
    \[
        K_{i_0,j_0}'\coloneqq \braket{\ini(\calP(X_i,X_j))\mid (i,j)\prec_s (i_0,j_0)} \subset S. 
    \] 
    In the remaining part of this proof, we will use mathematical induction with respect to the ordering of $\prec_s$ and show
    \[
        \depth(S/K_{i,j})\ge
        \begin{cases}
            m+2,& \text{if } (i,j)\prec_s (m+1,1), \\
            m+1, & \text{otherwise}.
        \end{cases}
    \]
    Once the above estimate is established, as $\ini(\calH_0)=K_{m+1,m}$, we will have $\depth(S/\ini(\calH_0))\ge m+1$, as expected.
    \begin{enumerate}[a]
        \item \label{case-a}
            Firstly, consider the cases when $(i,j)\prec_s (m+1,1)$.
            Notice that $\ini(\calP(X_i,X_j))=T_j^{a_j}X_i$ in this situation.
            Hence, for the base case of the discussion, we easily get 
            \[
                \depth(S/K_{2,1})=\dim(S)-1=2m\ge m+2.
            \]
            As for the general $(i,j)$ in this case, one can verify directly with ease that 
            \[
                K_{i,j}':\ini(\calP(X_i,X_j))=\braket{T_1^{a_1},T_2^{a_2},\dots,T_{j-1}^{a_{j-1}},X_{j+1},X_{j+2},\dots,X_{i-1}}.
            \]
            Therefore, 
            \[
                \depth(S/K_{i,j}':\ini(\calP(X_i,X_j)))\ge 2m+1-(j-1+(i-1-j))\ge m+3.
            \]
            By applying the Depth Lemma to sequences like that in \eqref{eqn:SES}, we can use induction to get 
            \[
                \depth(S/K_{i,j})\ge m+2
            \]
            in this case.

        \item In the remaining cases, we have $(i,j)=(m+1,j)$. 
            First of all, we notice that $K_{m+1,1}'=K_{m,m-1}$. And by the result in case \ref{case-a}, we already have
            \[
                \depth(S/K_{m,m-1})\ge m+2.
            \]
            Meanwhile, $\ini(\calP(X_i,X_j))=T_j^{a_j-b_j}W$ for general $(i,j)$ in this case. And one can verify directly with ease that $K_{i,j}':\ini(\calP(X_i,X_j))$ is generated by
            \[
                T_1^{a_1-b_1},T_2^{a_2-b_2},\dots,T_{j-1}^{a_{j-1}-b_{j-1}}
            \]
            together with
            \begin{align}
                T_j^{b_j}X_{j+1},
                T_j^{b_j}X_{j+2},
                \dots
                T_j^{b_j}X_{m},
                T_{j+1}^{a_{j+1}}X_{j+2},
                T_{j+1}^{a_{j+1}}X_{j+3},
                \dots, 
                T_{j+1}^{a_{j+1}}X_{m}, \notag\\
                \dots,
                T_{m-2}^{a_{m-2}}X_{m-1},
                T_{m-2}^{a_{m-2}}X_{m},
                T_{m-1}^{a_{m-1}}X_{m}. \label{eqn:group2}
            \end{align}
            The ideal generated by the second group \eqref{eqn:group2} is essentially $K_{m,m-1}$, however, with the parameters $m$ replaced by $m-(j-1)$ and $(a_1,\dots,a_m)$ replaced by $(b_j,a_{j+1},\dots,a_m)$. Thus, by the result in the previous case \ref{case-a},
            \begin{align*}
                & \depth(S/(K_{i,j}':\ini(\calP(X_i,X_j))))\ge \\
                & \qquad  \qquad \qquad (2(j-1)-(j-1))+(m-(j-1)+2)=m+2.
            \end{align*}
            Now, we can apply the Depth Lemma again to the short exact sequences
            \[
                0\to S/(K_{i,j}':\ini(\calP(X_i,X_j))) \to S/K_{i,j}' \to S/K_{i,j}\to 0
            \]
            for $1\le j\le m$ and use induction to have
            \[
                \depth(S/K_{i,j})\ge m+1,
            \]
            as expected. 
            \qedhere
    \end{enumerate}
\end{proof}

\begin{Remark}
    In our circumstance, 
    \[
        S/L_1S=S/\calH_0\cong \Sym(I),
    \]
    the symmetric algebra of $I$.
    Since $I$ is an Artinian almost complete intersection ideal, $\Sym(I)$ is Cohen--Macaulay by \cite[Corollary 10.2]{MR686942}. In particular, this algebra is unmixed having dimension $\dim(R)+1=m+1$ by \cite[Proposition 8.5]{MR686942}.
    Notice that 
    \[
        \dim (S/\calH_0)=\dim(S/\ini(\calH_0)) \quad \text{and} \quad \depth (S/\calH_0)\ge \depth(S/\ini(\calH_0));
    \]
    cf.~\cite[Theorem 3.3.4]{MR2724673} and \cite[Theorem 1.3]{MR2153889}.
    Thus, $S/\ini(\calH_0)$ is also Cohen--Macaulay. 
\end{Remark}

Finally, we can prove our main result of this paper.

\begin{Theorem}
    \label{thm:ACM}
    With the given monomial Artinian ideal $I$ given in the equation \eqref{eqn:I}, the Rees algebra $\calR(I)$ is almost Cohen--Macaulay.
\end{Theorem}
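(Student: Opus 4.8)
The plan is to reduce the statement $\depth_S(S/L)\ge m$ to the chain of ideals $\calH_0\subset\calH_1\subset\cdots\subset\calH_N$ built in this section, where $\calH_N$ has the same initial ideal as $L$ (since $\Gamma_0\sqcup\Gamma_2$ is a Gr\"obner basis of $L$). Because passing to an initial ideal can only decrease depth, it suffices to prove $\depth_S(S/\ini(L))=\depth_S(S/\ini(\calH_N))\ge m$. So the whole argument takes place on the level of the monomial ideals $\ini(\calH_j)$.

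The key steps, in order: First, by Proposition \ref{prop:GB-subideal} each $\Theta_j$ is a Gr\"obner basis of $\calH_j$, so $\ini(\calH_j)=\braket{\ini(g)\mid g\in\Theta_j}$; in particular $\ini(\calH_j)=\ini(\calH_{j-1})+\braket{\ini(\calP(W^{|\bdc_j|},\bdX^{\bdc_j}))}$. Second, the base case: Proposition \ref{prop:InitSymmAlgIdealIsCM} gives $\depth_S(S/\ini(\calH_0))\ge m+1\ge m$. Third, the inductive step: assume $\depth_S(S/\ini(\calH_{j-1}))\ge m$. Write $\ini(\calH_{j-1})=\braket{f_1,\dots,f_{r-1}}$ (a monomial generating set) and $f_r:=\ini(\calP(W^{|\bdc_j|},\bdX^{\bdc_j}))$, so $\ini(\calH_j)=\braket{f_1,\dots,f_r}$. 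By Proposition \ref{prop:colon-ideals-are-extended} the colon ideal $\braket{f_1,\dots,f_{r-1}}:f_r$ is extended from $R$. Hence Lemma \ref{rmk:key-points} applies and yields $\depth_S(S/\ini(\calH_j))\ge m$. Iterating from $j=1$ to $j=N$ gives $\depth_S(S/\ini(\calH_N))\ge m$, hence $\depth_S(S/L)\ge m$, i.e. $\depth_S(\calR(I))\ge m$.

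Finally, I must check this matches the definition of almost Cohen--Macaulay, namely $\depth_S(\calR(I))\ge\dim(\calR(I))-1$. Here $\dim(\calR(I))=\dim R+1=m+1$ since $I$ is a nonzero ideal in the $m$-dimensional ring $R$ (it has positive height, indeed $I$ is $\m$-primary), so $\dim(\calR(I))-1=m$, and the inequality $\depth_S(S/L)\ge m$ is exactly what is needed. One small wrinkle to address: Lemma \ref{rmk:key-points} is stated for an ideal $\calH$ with a chosen ordered generating set and requires only the single colon $\braket{f_1,\dots,f_{r-1}}:f_r$ to be extended and $\depth_S(S/\braket{f_1,\dots,f_{r-1}})\ge m$; so at each stage I should be careful that the generators $f_1,\dots,f_{r-1}$ used are precisely a generating set of $\ini(\calH_{j-1})$ and that $f_r$ is the single new generator $\ini(\calP(W^{|\bdc_j|},\bdX^{\bdc_j}))$, which is how $\ini(\calH_j)$ was described.

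The main obstacle is essentially bookkeeping rather than a new idea: one must verify that $\ini(\calH_{j-1}):\ini(\calP(W^{|\bdc_j|},\bdX^{\bdc_j}))$, as computed in Proposition \ref{prop:colon-ideals-are-extended}, is genuinely the colon appearing in the hypothesis of Lemma \ref{rmk:key-points} for the chosen generating set, and that Proposition \ref{prop:InitSymmAlgIdealIsCM} indeed supplies the base of the induction with the correct ring $S=R[X_1,\dots,X_m,W]$ and the correct value $m=\dim R$. Once these identifications are in place, the induction runs immediately and the theorem follows.

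\begin{proof}[Proof of Theorem \ref{thm:ACM}]
    Since $I$ is $\m$-primary, $\dim(\calR(I))=\dim(R)+1=m+1$, so it suffices to prove $\depth_S(\calR(I))=\depth_S(S/L)\ge m$. By \cite[Theorem 3.3.4]{MR2724673} and \cite[Theorem 1.3]{MR2153889}, $\depth_S(S/L)\ge\depth_S(S/\ini(L))$, and since $\Gamma_0\sqcup\Gamma_2$ is a Gr\"obner basis of $L$ we have $\ini(L)=\ini(\calH_N)$. Hence it is enough to show $\depth_S(S/\ini(\calH_N))\ge m$.

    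We argue by induction on $j$ that $\depth_S(S/\ini(\calH_j))\ge m$ for $0\le j\le N$. For $j=0$, Proposition \ref{prop:InitSymmAlgIdealIsCM} gives $\depth_S(S/\ini(\calH_0))\ge m+1\ge m$. Assume $\depth_S(S/\ini(\calH_{j-1}))\ge m$ for some $1\le j\le N$. By Proposition \ref{prop:GB-subideal}, $\Theta_{j-1}$ and $\Theta_j$ are Gr\"obner bases of $\calH_{j-1}$ and $\calH_j$ respectively, so if we let $f_1,\dots,f_{r-1}$ be the monomials $\ini(g)$ with $g\in\Theta_{j-1}$ and set $f_r\coloneqq\ini(\calP(W^{|\bdc_j|},\bdX^{\bdc_j}))$, then $\ini(\calH_{j-1})=\braket{f_1,\dots,f_{r-1}}$ and $\ini(\calH_j)=\braket{f_1,\dots,f_r}$. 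By Proposition \ref{prop:colon-ideals-are-extended}, the colon ideal $\braket{f_1,\dots,f_{r-1}}:f_r$ is extended with respect to $R\subset S$. Applying Lemma \ref{rmk:key-points} with $\calH=\ini(\calH_j)$, we obtain $\depth_S(S/\ini(\calH_j))\ge m$, completing the induction.

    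In particular $\depth_S(S/\ini(\calH_N))\ge m$, hence $\depth_S(S/L)\ge m=\dim(\calR(I))-1$, so $\calR(I)$ is almost Cohen--Macaulay.
\end{proof}
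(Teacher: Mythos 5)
Your proof is correct and follows essentially the same route as the paper's: reduce to $\depth(S/\ini(L))\ge m$ via the standard depth inequality for initial ideals, then run the induction along the chain $\ini(\calH_0)\subset\cdots\subset\ini(\calH_N)$ using Proposition \ref{prop:InitSymmAlgIdealIsCM} as the base case and Proposition \ref{prop:colon-ideals-are-extended} together with Lemma \ref{rmk:key-points} for the inductive step. Your write-up is in fact slightly more careful than the paper's about the bookkeeping identification $\ini(\calH_j)=\ini(\calH_{j-1})+\braket{\ini(\calP(W^{|\bdc_j|},\bdX^{\bdc_j}))}$ coming from Proposition \ref{prop:GB-subideal}, which the paper leaves implicit.
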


\begin{proof}
    Notice that $\calR(I)\cong S/L$ while it is well-known that
    \[
        m+1=\dim (S/L)=\dim(S/\ini(L)) \quad \text{and} \quad
        \depth (S/L)\ge \depth(S/\ini(L)).
    \]
    Thus, it suffices to show that $S/\ini(L)$ is almost Cohen--Macaulay, i.e., to show that 
    \[
        \depth(S/\ini(L))\ge m.
    \] 
    Notice that $\ini(L)=\calH_N$. We will apply the technique outlined in Lemma \ref{rmk:key-points} repeatedly towards the ideals $\calH_j$, $1\le j\le N$. Since we have shown in Proposition \ref{prop:colon-ideals-are-extended} that the successive colon ideals are extended, it remains to verify that $\depth(S/\ini(\calH_0)\ge m$. And this is covered (with a stronger result) in Proposition \ref{prop:InitSymmAlgIdealIsCM}.
\end{proof}

In particular, we are able to cover the corresponding almost Cohen--Macaulay results in  \cite[Theorem 2.5]{MR3490460}, \cite[Proposition 4.12]{MR3096902} and \cite[Theorems 3.7 and 3.14]{MR3295062}.

\section{Applications to the equi-generated case}

In this last section, we will focus on the equi-generated case when $a_1=a_2=\cdots=a_m=|\bdb|$, except for Theorem \ref{prop:smaller-generators-m} which holds for slightly more general situation.  

Notice that finding the minimal generators of the defining ideal $L$ in the equi-generated case is also strongly related to the problem of computing the moving curve ideals of monomial rational parameterizations of the form
\[
    \varphi: \PP_{\KK}^m \to \PP_{\KK}^{m+1}, \qquad (t_1:t_2:\cdots:t_m)\mapsto (t_1^{\left| \bdb \right|}:t_2^{|\bdb|}:\cdots:t_m^{|\bdb|}:t_1^{b_1}t_2^{b_2}\cdots t_m^{b_m}).
\]
See for instance \cite{MR3320795} along this line. However,  from the work of \cite{MR3320795}, \cite{MR2013172} and \cite{MR3295062}, one can see that finding the explicit presentation of the minimal generating set is very difficult. One must impose very strong conditions in order to write out the minimal generating set explicitly. The next question will be if one can at least find a bound on the degree of generators of the defining ideal $L$ without narrowing down to special cases. It is unfortunate that even for a degree $3$ square-free monomial ideal, one can get a generator of any degree if no other restriction is placed; see for example \cite{MR3853064} and \cite{arXiv:1809.08351}. Finding the degree bound is actually a classical question in the  commutative algebra community.  Recall that with the $\ZZ$-graded ideal $L=\bigoplus_{i\ge 1}L_i$, the \emph{relation type} of the Rees algebra $\calR(I)$ is defined to be
\[
    \reltype(I):=\inf\Set{s|L=\braket{L_1,L_2,\dots,L_s}}.
\]
This invariant, which is independent of the chosen set of generators of $I$, gives us some idea on how far the Rees algebra $\calR(I)$ is from the symmetric algebra $\Sym(I)$ and provides us an insight of the complexity of the Rees algebra.

In the following, we want to fine-tune the result in Proposition \ref{prop:more-generators-m} when $|\bdb|\le \min(\bda):=\min(a_1,\dots,a_m)$. 

\begin{Theorem}
    \label{prop:smaller-generators-m}
    With assumptions as before, suppose further that $|\bdb|\le \min(\bda)$. Then, the binomials in $\Gamma_0$ as well as those in
    \begin{equation}
        \Gamma_3:= \left\{\calP(W^{|\bdc|},\bdX^{\bdc})\,\Big|\, \bdzero\ne \bdc\le_{\tau''} \bdb\right\}.
        \label{eqn:higher-parts-m}
    \end{equation}
    provide a \textup{(}non-reduced\textup{)} Gr\"obner basis of $L$. Here, $\tau''$ denotes the partial ordering of $\NN^m$ by componentwise comparison. In particular, $\reltype(I)\leq |\bdb|$.
\end{Theorem}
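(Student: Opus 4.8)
The plan is to read everything off Proposition~\ref{prop:more-generators-m}. Since $\Gamma_0\cup\Gamma_1$ is a Gr\"obner basis of $L$, we have $\ini(L)=\braket{\ini(g)\mid g\in\Gamma_0\cup\Gamma_1}$; because $\Gamma_3\subseteq\Gamma_1$ and $\Gamma_0\cup\Gamma_3\subseteq L$, it therefore suffices to prove that the leading monomial of each $g\in\Gamma_1$ already lies in $M:=\braket{\ini(g)\mid g\in\Gamma_0\cup\Gamma_3}$. Granting this, $M\subseteq\ini(L)=\braket{\ini(\Gamma_0\cup\Gamma_1)}\subseteq M$, so $M=\ini(L)$; as $\braket{\Gamma_0\cup\Gamma_3}\subseteq L$ with the same initial ideal, $\braket{\Gamma_0\cup\Gamma_3}=L$, so $\Gamma_0\cup\Gamma_3$ is a (not necessarily reduced) Gr\"obner basis of $L$. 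Finally, each element of $\Gamma_0$ is linear in $\widetilde{\bdX}$ and each $\calP(W^{|\bdc|},\bdX^{\bdc})\in\Gamma_3$ is homogeneous of $\widetilde{\bdX}$-degree $|\bdc|\le|\bdb|$ (since $\bdc\le_{\tau''}\bdb$ forces $|\bdc|\le|\bdb|$), so $L=\braket{\Gamma_0\cup\Gamma_3}\subseteq\braket{L_1,L_2,\dots,L_{|\bdb|}}$, which gives $\reltype(I)\le|\bdb|$.

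First I would record the (routine) computation that, with respect to $\tau$, $\ini(\calP(W^{|\bdc|},\bdX^{\bdc}))=\bdT^{\bdalpha(\bdc)}W^{|\bdc|}$, where $\alpha_i(\bdc):=\max\{0,\,a_ic_i-|\bdc|\,b_i\}$: the $W$-term dominates because it alone carries $W$ and $|\bdc|\ge 1$. So the whole task reduces to showing $\bdT^{\bdalpha(\bdc)}W^{|\bdc|}\in M$ for every $\bdzero\ne\bdc\in\NN^m$, and I would argue by induction on $\bdc$ with respect to the total order $\tau'$ of Section~3 (for which $\bdzero$ is the smallest vector). If $\bdc\le_{\tau''}\bdb$ there is nothing to show, as then $\calP(W^{|\bdc|},\bdX^{\bdc})\in\Gamma_3$; otherwise put $B:=\{\,i\mid c_i>b_i\,\}\ne\emptyset$ and distinguish three cases.

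\emph{Case 1: some $i_0\in B$ has $b_{i_0}=0$.} Then $\alpha_{i_0}(\bdc)=a_{i_0}c_{i_0}\ge a_{i_0}-b_{i_0}$, so $\ini(\calP(W,X_{i_0}))=T_{i_0}^{a_{i_0}-b_{i_0}}W$ divides $\bdT^{\bdalpha(\bdc)}W^{|\bdc|}$, and $\calP(W,X_{i_0})\in\Gamma_0$. \emph{Case 2: $\bdc\ge_{\tau''}\bdb$ componentwise} (hence $\bdc\ne\bdb$, as $\bdc\not\le_{\tau''}\bdb$). Set $\bdc':=\bdc-\bdb\in\NN^m\setminus\{\bdzero\}$. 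The identity $a_i(c_i-b_i)-(|\bdc|-|\bdb|)b_i=(a_ic_i-|\bdc|\,b_i)-b_i(a_i-|\bdb|)$ together with the hypothesis $\min(\bda)\ge|\bdb|$ gives $\alpha_i(\bdc')\le\alpha_i(\bdc)$ for all $i$, while $|\bdc'|<|\bdc|$ gives $\bdc'\prec_{\tau'}\bdc$; by induction $\ini(\calP(W^{|\bdc'|},\bdX^{\bdc'}))\in M$, and it divides $\bdT^{\bdalpha(\bdc)}W^{|\bdc|}$.

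\emph{Case 3: every $i\in B$ has $b_i>0$ and $\bdc\not\ge_{\tau''}\bdb$.} This is the heart of the matter and where I expect the real difficulty. My first attempt would be to reduce to the componentwise minimum $\bdd:=\bdc\wedge\bdb$: it satisfies $\bdd\le_{\tau''}\bdb$, it is nonzero (otherwise some coordinate has $c_i>0=b_i$, placing $i$ in $B$ with $b_i=0$, against the case hypothesis), and $|\bdd|<|\bdc|$; the point to verify is $\bdalpha(\bdd)\le\bdalpha(\bdc)$ componentwise, once more via $\min(\bda)\ge|\bdb|$. When this particular reduction fails at some $i\in B$ --- which can only happen when $a_ic_i<|\bdc|\,b_i$ (whence $a_i<|\bdc|$) or when $a_i(c_i-b_i)<b_i\sum_{k\in B}(c_k-b_k)$ --- I would instead pass to a ``characteristic'' vector $\bdd=\sum_{i\in B'}\bde_i$ (here $\bde_i$ is the $i$-th standard basis vector) for a carefully chosen subset $B'\subseteq B$, equivalently to the binomial $\calP(W^{|B'|},\prod_{i\in B'}X_i)\in\Gamma_3$, the aim being to choose $|B'|$ so that $\max\{0,\,a_i-|B'|\,b_i\}\le\alpha_i(\bdc)$ for every $i\in B'$ (the remaining coordinates of $\bdd$ being automatically harmless); such a $\bdd$ is again $\le_{\tau''}\bdb$ and has $W$-degree $|B'|\le|B|<|\bdc|$. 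The main obstacle is exactly this last case: the bookkeeping of surplus versus deficit coordinates and the precise choice of $\bdd$ that makes the divisibility $\ini(\calP(W^{|\bdd|},\bdX^{\bdd}))\mid\bdT^{\bdalpha(\bdc)}W^{|\bdc|}$ hold in every configuration. Throughout, the inequality $\min(\bda)\ge|\bdb|$ is precisely what prevents the exponents $\alpha_i$ from growing under these reductions, which is presumably why this hypothesis --- rather than merely equi-generation --- is needed.
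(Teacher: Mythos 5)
Your global strategy --- reduce to showing that $\ini(\calP(W^{|\bdc|},\bdX^{\bdc}))=\bdT^{\bdalpha(\bdc)}W^{|\bdc|}$, with $\alpha_i(\bdc)=\max\{0,\,a_ic_i-|\bdc|b_i\}$, lies in $M=\braket{\ini(g)\mid g\in\Gamma_0\cup\Gamma_3}$ for every $\bdzero\ne\bdc$ --- is sound, and your Cases 1 and 2 are correct; in particular the identity $a_i(c_i-b_i)-(|\bdc|-|\bdb|)b_i=(a_ic_i-|\bdc|b_i)-b_i(a_i-|\bdb|)$ is exactly where the hypothesis $|\bdb|\le\min(\bda)$ enters, just as in the paper. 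But Case 3, which you yourself call the heart of the matter, is not a proof: you propose a reduction, note that it may fail, and leave the repair as unresolved ``bookkeeping.'' Worse, the candidate you lead with is provably the wrong one. Take $m=3$, $\bda=(3,3,3)$, $\bdb=(1,1,1)$, $\bdc=(1,3,0)$: this falls in your Case 3, $\bdd=\bdc\wedge\bdb=(1,1,0)$, and $\alpha_1(\bdd)=\max\{0,3-2\}=1>0=\max\{0,3-4\}=\alpha_1(\bdc)$, so $\ini(\calP(W^2,X_1X_2))=T_1T_2W^2$ does not divide $\ini(\calP(W^4,X_1X_2^3))=T_2^5W^4$. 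Note the failure occurs at $i=1\notin B$, contradicting your claim that it ``can only happen at some $i\in B$'': shrinking the $W$-degree from $|\bdc|$ to $|\bdd|$ can only increase $\max\{0,a_ic_i-(\cdot)b_i\}$ at every coordinate with $d_i=c_i$, so passing to $\bdc\wedge\bdb$ (or to a characteristic vector supported on $B$) pushes in the wrong direction.

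The reduction that works --- and is the paper's --- is the complementary one: replace $\bdc$ by $\bdc-(\bdc\wedge\bdb)$, i.e.\ by the positive part of $\bdc-\bdb$; your Case 2 is just the special instance $\bdc\ge_{\tau''}\bdb$. Concretely, setting $\bdc'=\bdc\wedge\bdb$ and $F=AW^{|\bdc|}-B\bdX^{\bdc}$, $F'=A'W^{|\bdc'|}-B'\bdX^{\bdc'}$, the paper proves that the \emph{tail} coefficient $B'$ divides $B$ (this is where $|\bdb|\le\min(\bda)$ is used), performs one division step, and finds that $F-\frac{B}{B'}\bdX^{\bdc-\bdc'}F'$ equals $W^{|\bdc'|}$ times a multiple of $\calP(W^{|\bdc|-|\bdc'|},\bdX^{\bdc-\bdc'})$; hence $\bdalpha(\bdc-\bdc')\le\bdalpha(\bdc)$ componentwise and $\ini(F)$ is divisible by $\ini(\calP(W^{|\bdc|-|\bdc'|},\bdX^{\bdc-\bdc'}))$, after which one concludes by induction on $|\bdc|$ (your Case 1 conveniently covers the degenerate situation $\bdc\wedge\bdb=\bdzero$, which the paper's write-up glosses over). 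In the example above this chain is $(1,3,0)\to(0,2,0)\to(0,1,0)$, terminating at $\calP(W,X_2)\in\Gamma_0$ with $\ini(\calP(W,X_2))=T_2^{2}W\mid T_2^5W^4$. Until your Case 3 is replaced by this (or an equivalent) argument, the proposal does not establish the theorem.
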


\begin{proof}
    In view of Proposition \ref{prop:more-generators-m}, we may take arbitrary $F:=\calP(W^{|\bdc|},\bdX^{\bdc}) \in \Gamma_1\setminus\Gamma_3$.  If we introduce $\bdc':=(c_1',\dots,c_m')$ where $c_i'=\min(b_i,c_i)$ for each $i$,  then $\bdc'\lneq_{\tau''} \bdc$.  In the following, we want to reduce $F$ by $F':=\calP(W^{|\bdc'|},\bdX^{\bdc'})\in \Gamma_3$.  For simplicity, we will write $F= AW^{|\bdc|}-B\bdX^{\bdc}$ and $F'= A'W^{|\bdc'|}-B'\bdX^{\bdc'}$ with monomials $A,A',B,B'\in R$.

    We claim that $B'$ divides $B$ in $R$. To see this, for each $i$, we show that the factors  of $B'$ in $T_i$ divides the factors  of $B$ in $T_i$, i.e.,
    \[
        \frac{T_{i}^{b_i|\bdc'|}}{\gcd(T_{i}^{b_i|\bdc'|}, T_i^{c_i'a_i})} 
        \quad \text{divides} \quad
        \frac{T_{i}^{b_i|\bdc|}}{\gcd(T_{i}^{b_i|\bdc|}, T_i^{c_ia_i})},
    \]
    or equivalently,
    \begin{equation}
        \max(0,b_i|\bdc'|-c_i'a_i) \le 
        \max(0,b_i|\bdc|-c_ia_i).
        \label{eqn:dd'-m}
    \end{equation}
    When $c_i\ge b_i$, then $c_i'=b_i$ and $|\bdc'|\le |\bdb|\le a_i$. Thus, the left hand side of the inequality \eqref{eqn:dd'-m} becomes $0$. In particular, the inequality \eqref{eqn:dd'-m} holds in this case. When $c_i<b_i$, then $c_i'=c_i$ and the inequality \eqref{eqn:dd'-m} becomes
    \[
        \max(0,b_i|\bdc'|-c_ia_i) \le \max(0,b_i|\bdc|-c_ia_i).
    \]
    Since $|\bdc'|\le|\bdc|$, the above inequality holds. This finishes our proof of the claim.

    Now,
    \[
        F= \frac{B}{B'} \bdX^{\bdc-\bdc'} F'
        + W^{|\bdc'|}\left( A W^{|\bdc|-|\bdc'|}-\frac{B}{B'} A'\bdX^{\bdc-\bdc'}\right).
    \]
    Since both $F$ and $F'$ belong to the defining ideal $L$, so does $A W^{|\bdc|-|\bdc'|}-\frac{B}{B'} A'\bdX^{\bdc-\bdc'}$.  But this will imply that $A W^{|\bdc|-|\bdc'|}-\frac{B}{B'} A'\bdX^{\bdc-\bdc'}$ is a multiple of $\calP(W^{|\bdc|-|\bdc'|},\bdX^{\bdc-\bdc'})$.  Notice that $|\bdc|-|\bdc'|\ge 1$ by our assumption. Thus, by an induction argument on $|\bdc|$, we see that $F\in \braket{\Gamma_0\cup \Gamma_3}$.

    Meanwhile, notice that
    \[
        |\bdc'|=\deg_W \left( \ini \left( 
                \frac{B}{B'} \bdX^{\bdc-\bdc'} F'
            \right) \right) < |\bdc|=\deg_W (\ini(F)).
    \]
    Thus, 
    \[
        \ini(F)= W^{|\bdc'|}\cdot \ini \left( A W^{|\bdc|-|\bdc'|}-\frac{B}{B'} A'\bdX^{\bdc-\bdc'}\right),
    \]
    which in turn will be a multiple of $\ini(\calP(W^{|\bdc|-|\bdc'|},\bdX^{\bdc-\bdc'}))$. But again by induction, the latter is divisible by the leading monomial of some binomial in $\Gamma_0\cup \Gamma_3$. Thus, this completes our proof.  
\end{proof}

Notice that the assumption in Theorem \ref{prop:smaller-generators-m} holds particularly for the equi-generated case. Indeed, from now on, we will always assume that the ideal $I$ is equi-generated.

Now, we take a second look at the $\ZZ$-graded defining ideal $L=\bigoplus_{i\ge 1}L_i$, with $L_i$ being the degree $i$ piece of $L$. In particular, the ideal of the linear part is
\[
    L_1S= \braket{\calP(X_i,X_j)\mid 1\le j<i \le m+1}.
\]
It is known by \cite[Proposition 1.1]{MR2153889} that the defining ideal
\begin{equation}
    L= L_1S:(T_1T_2\cdots T_m)^{\infty}.
    \label{eqn:L-saturation}
\end{equation}
To see this, we also need to notice that $I$ is a monomial ideal and consequently $(T_1T_2\cdots T_{m})^{k}\in I$ for any $k\in \NN$ sufficiently large. Note that the above formula is also established in \cite[Theorem 2.1]{arXiv:1806.08184} for the multi-Rees algebra case. 

With the equation \eqref{eqn:L-saturation}, a natural question is to find a satisfactory small $\ell\in \NN$ such that
\[
    L=L_1S:(T_1T_2\cdots T_m)^{\ell}.
\]
Indeed, since $\calR(I)$ is almost Cohen--Macaulay, we have $L=L_1S:\alpha^{\infty}=L_1S:\frakm^{\infty}$ for any nonzero $\alpha\in I$ or $\alpha\in I_1(\varphi)$; cf.~\cite[Section 2.1]{zbMATH06113058}. Here, $I_1(\varphi)$ is the ideal of entries of the minimal presentation matrix of $I$ and $\frakm$ is the graded maximal ideal of $R$.

Recall that a sub-ideal $J$ is called a \emph{reduction} of the ideal $I$ if there is an integer $n$ such that $I^{n-1}J=I^n$. The \emph{reduction number} of $I$ with respect to the reduction ideal $J$ is the smallest integer $n$ such that  $I^{n-1}J=I^n$, and will be denoted by $\red_J(I)$.
Meanwhile, a reduction ideal of $I$ is called \emph{minimal} if it does not contain properly another reduction of $I$.
In our equi-generated case, one can check easily that $J:=\braket{T_1^{|\bdb|},T_2^{|\bdb|},\dots,T_m^{|\bdb|}}$ is a minimal reduction of the ideal $I= \braket{T_1^{|\bdb|},T_2^{|\bdb|},\dots,T_m^{|\bdb|}, \bdT^{\bdb}}$.
If $\epsilon$ is the socle degree of $R/(J:\bdT^{\bdb})$, then for $r=\epsilon+1$, we have $L=L_1S:\frakm^r$ by \cite[Theorem 2.2]{zbMATH06113058}. And more precisely, for \emph{any} $0\ne g\in \frakm^{r}$, we have $L=L_1S:g$. 

We can calculate the socle degree $\epsilon$ explicitly. It is clear that
\[
    J:\bdT^{\bdb}=\braket{T_i^{|\bdb|-b_i}\mid 1\le i\le m}.
\]
Thus, the socle degree of $R/(J:\bdT^{\bdb})$ equals $\sum_{i=1}^{m}(|\bdb|-b_i-1)=(m-1)|\bdb|-m$. Consequently, the expected \emph{secondary elimination degree} (with respect to this minimal reduction ideal $J$) is $r=(m-1)|\bdb|-m+1=(m-1)(|\bdb|-1)$.

From the above argument, we immediately have

\begin{Corollary}
    Assume that the Artinian almost complete intersection monomial ideal $I$ is equi-generated.  If $\ell\in \ZZ$ satisfies $\ell\ge \frac{(m-1)(|\bdb|-1)}{m}$, then $L_1S:(T_1T_2\cdots T_m)^{\ell}=L$. In particular, $L_1S:(T_1T_2\cdots T_m)^{|\bdb|}=L$. 
\end{Corollary}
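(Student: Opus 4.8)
The plan is to read the result off directly from the discussion immediately preceding it. Since the Rees algebra $\calR(I)$ is almost Cohen--Macaulay by Theorem \ref{thm:ACM}, the cited results give $L = L_1S : g$ for \emph{every} nonzero element $g \in \frakm^{r}$, where, as computed just above, $r = (m-1)(|\bdb|-1)$ is the secondary elimination degree obtained from the socle degree $\epsilon = (m-1)|\bdb| - m$ of $R/(J:\bdT^{\bdb})$. So the only thing left to check is that $(T_1T_2\cdots T_m)^{\ell}$ is one of these admissible nonzero elements of $\frakm^{r}$.

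First I would observe that $(T_1T_2\cdots T_m)^{\ell}$ is a nonzero monomial in $R$ of total degree $m\ell$, hence it lies in $\frakm^{m\ell}$. The hypothesis $\ell \ge \frac{(m-1)(|\bdb|-1)}{m}$ is equivalent to $m\ell \ge (m-1)(|\bdb|-1) = r$, and therefore $\frakm^{m\ell} \subseteq \frakm^{r}$ by monotonicity of powers of the graded maximal ideal. This gives $0 \ne (T_1T_2\cdots T_m)^{\ell} \in \frakm^{r}$, and applying the equality $L = L_1S : g$ with $g = (T_1T_2\cdots T_m)^{\ell}$ yields $L_1S : (T_1T_2\cdots T_m)^{\ell} = L$ at once.

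For the final assertion, I would simply verify that $\ell = |\bdb|$ meets the bound: the inequality $|\bdb| \ge \frac{(m-1)(|\bdb|-1)}{m}$ rearranges to $m\,|\bdb| \ge (m-1)(|\bdb|-1)$, i.e.\ to $m + |\bdb| \ge 1$, which is trivially true, so the special case follows from the first part. There is essentially no obstacle here; the only points requiring (routine) care are the degree bookkeeping $\deg\big((T_1T_2\cdots T_m)^{\ell}\big) = m\ell$ and the containment $\frakm^{m\ell}\subseteq\frakm^{r}$, together with making sure the rational threshold $\frac{(m-1)(|\bdb|-1)}{m}$ is used in the form $m\ell\ge r$ so that no spurious rounding enters.
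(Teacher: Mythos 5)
Your argument is correct and is precisely the derivation the paper itself intends when it states that the corollary follows ``immediately'' from the preceding discussion: the degree count $m\ell\ge (m-1)(|\bdb|-1)=r$ places the nonzero monomial $(T_1T_2\cdots T_m)^{\ell}$ in $\frakm^{r}$, and the almost Cohen--Macaulayness of $\calR(I)$ together with the cited result gives $L=L_1S:g$ for any nonzero $g\in\frakm^{r}$. The paper's displayed proof is only an \emph{alternative} argument that reduces the Gr\"obner basis elements of Theorem \ref{prop:smaller-generators-m} modulo $L_1S$ by hand; your shorter route is the one the paper endorses first, and your verification of the special case $\ell=|\bdb|$ is likewise fine.
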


Using the Gr\"obner basis given in Theorem \ref{prop:smaller-generators-m}, we can give another proof.

\begin{proof}
    In view of Theorem \ref{prop:smaller-generators-m}, it suffices to take arbitrary nonzero $F\in \Gamma_3\setminus\Gamma_0$ and show that $(T_1T_2\cdots T_m)^{\ell}F\in L_1S$. Write $F=AW^{|\bdc|}-B\bdX^{\bdc}$. We will use the leading terms of the following binomials in $\Gamma_0$ to reduce successively $(T_1T_2\cdots T_m)^{\ell}AW^{|\bdc|}$:
    \[
        \calP(W,X_i)=T_i^{|\bdb|-b_i}W-\prod_{j\ne i}T_j^{b_j} \cdot X_i, \qquad \text{for $1\le i\le m$}.
    \]
    At each step, although we will keep the monomial $A$ intact, the degree with respect to $W$ with drop by one, the degree with respect to $\bdX$ will increase by one, and the degree with respect to $\bdT$ will remain. Thus, we will have some monomial of the form 
    \[
        \bdT^{\bdalpha}A\bdX^{\bdc'}W^{|\bdc|-|\bdc'|}
    \]
    with $|\bdalpha|=m\ell$. Since $\ell\ge \frac{(m-1)(|\bdb|-1)}{m}$, the following inequalities cannot hold simultaneously:
    \[
        |\bdb|-b_i\ge \alpha_i+1\qquad \text{for $1\le i\le m$}.
    \]
    Without loss of generality, we may assume that $|\bdb|-b_1\le \alpha_1$. Whence, we will use the leading term of $\calP(W,X_1)$ to get the new monomial
    \[ 
        \frac{\bdT^{\bdalpha+\bdb}}{T_1^{|\bdb|}}AX_1\bdX^{\bdc'}W^{|\bdc|-|\bdc'|-1}.
    \]

    Starting from $(T_1T_2\cdots T_m)^{\ell}AW^{|\bdc|}$,
    after $|\bdc|$ steps, we will arrive at a monomial of the form
    \[
        \bdT^{\widetilde{\bdalpha}}A\bdX^{\widetilde{\bdc}}
    \]
    with $|\widetilde{\bdalpha}|=m\ell$ and $|\widetilde{\bdc}|=|\bdc|$.
    This means that 
    \[
        (T_1T_2\cdots T_m)^{\ell}F\equiv \widetilde{F}:=\bdT^{\widetilde{\bdalpha}}A\bdX^{\widetilde{\bdc}}-(T_1T_2\cdots T_m)^{\ell}B\bdX^{\bdc} \mod L_1S.
    \]
    Write $G:=\gcd(\bdX^{\widetilde{\bdc}},\bdX^{\bdc})$.
    Notice that as $\widetilde{F}\in L$, $\widetilde{F}$ is a multiple of $\calP(\bdX^{\widetilde{\bdc}}/G,\bdX^{\bdc}/G)$. But we have seen already that 
    \[
        \calP(\bdX^{\widetilde{\bdc}}/G,\bdX^{\bdc}/G)\in \braket{\calP(X_i,X_j)\mid 1\le j\le i\le m}\subset L_1S.
    \]
    And this completes the proof.
\end{proof}

\begin{Example}
    The bound $\ell\ge \frac{(m-1)(|\bdb|-1)}{m}$ above is sharp. To see this, we may take $m=3$ and $\bdb=(1,1,1)$. Using \texttt{Macaulay2} \cite{M2}, we can check directly that
    \[
        L_1S:(T_1T_2T_3)^1\subsetneq L_1S:(T_1T_2T_3)^2=L.
    \]
    Meanwhile, $\frac{(3-1)(3-1)}{3}=1.33\cdots$.
\end{Example}

Without loss of generality, we may further assume that $\gcd(\bdb):=\gcd(b_1,\dots,b_m)=1$; see also \cite[Lemma 3.1]{MR3295062}.  Under this assumption, one can check with ease that $\calP(W^{|\bdc|},\bdX^{\bdc})=W^{|\bdc|}-\bdX^{\bdc}$ if and only if $\bdc$ is a multiple of $\bdb$. In particular, $\calP(W^{|\bdb|},\bdX^{\bdb})$, which is the \emph{elimination equation} of $I$, belongs to the reduced Gr\"obner basis and minimal generating set of $L$; see also Proposition \ref{prop:min-basis}. This binomial has the largest possible degree with respect to $\widetilde{\bdX}$ by Theorem \ref{prop:smaller-generators-m}. This element is also the only non-zero generator of the defining ideal (toric ideal) of the special fiber algebra, $\mathcal{F}(I)$, associated to the monomial ideal $I$.  

Since we have seen in Theorem \ref{thm:ACM} that $\calR(I)$ is almost Cohen--Macaulay, by \cite[Corollary 3.9]{MR3096902}, we will have 

\begin{Corollary}
    \label{rmk:relation-type}
    If the Artinian almost complete intersection monomial ideal $I$ is equi-generated and $\gcd(\bdb)=1$, then $\dim(\mathcal{F}(I))=m$ and $\reltype(I)=\red_J(I)+1=|\bdb|$. 
\end{Corollary}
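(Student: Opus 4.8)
The plan is to feed three ingredients into Vasconcelos' criterion \cite[Corollary 3.9]{MR3096902}. The first is already available: $\calR(I)$ is almost Cohen--Macaulay by Theorem \ref{thm:ACM}. The second is a description of the special fiber $\mathcal{F}(I)$: from the discussion preceding the statement, the hypothesis $\gcd(\bdb)=1$ forces the toric ideal of $\mathcal{F}(I)$ to be the principal ideal generated by the elimination equation, so I would record that $\mathcal{F}(I)\cong \KK[X_1,\dots,X_m,W]/(W^{|\bdb|}-\bdX^{\bdb})$ is a hypersurface ring in $m+1$ variables and therefore $\dim(\mathcal{F}(I))=m$; equivalently, the analytic spread satisfies $\ell(I)=m$. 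The third is that $J=\braket{T_1^{|\bdb|},\dots,T_m^{|\bdb|}}$ --- which is a complete intersection and, as already noted, a minimal reduction of $I$ --- is minimally generated by exactly $m=\ell(I)$ elements. With these in hand, \cite[Corollary 3.9]{MR3096902} yields $\reltype(I)=\red_J(I)+1$ together with independence of $\red_J(I)$ from the chosen minimal reduction.

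The remaining task is to pin down $\reltype(I)$, and I would squeeze it from both sides. For the upper bound, Theorem \ref{prop:smaller-generators-m} already exhibits a Gr\"obner basis $\Gamma_0\cup\Gamma_3$ of $L$ whose members have $\widetilde{\bdX}$-degree at most $|\bdb|$, so $L=\braket{L_1,\dots,L_{|\bdb|}}$ and $\reltype(I)\le|\bdb|$. For the lower bound I would pass to the fiber cone: since $\frakm S+L$ is the kernel of $S\to\mathcal{F}(I)$, the image of $L$ in $\KK[\widetilde{\bdX}]=S/\frakm S$ equals the toric ideal $(W^{|\bdb|}-\bdX^{\bdb})$, which --- using $\gcd(\bdb)=1$, via a short exponent count --- contains no nonzero element of $\widetilde{\bdX}$-degree $<|\bdb|$. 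Since $L$ is $\widetilde{\bdX}$-graded and surjects onto this nonzero ideal, every $\widetilde{\bdX}$-homogeneous minimal generating set of $L$ must contain a generator of degree at least $|\bdb|$ --- concretely $\calP(W^{|\bdb|},\bdX^{\bdb})=W^{|\bdb|}-\bdX^{\bdb}$ itself, as already recorded. Hence $\reltype(I)=|\bdb|$, and then $\red_J(I)=|\bdb|-1$, giving the stated equalities.

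The one point demanding care is the lower bound $\reltype(I)\ge|\bdb|$: this is the only place where the normalization $\gcd(\bdb)=1$ is genuinely used, and dropping it changes the picture (the elimination equation need not be $W^{|\bdb|}-\bdX^{\bdb}$, the degree estimate for the toric ideal must be redone, and the reduced Gr\"obner basis of $L$ need not even be a minimal generating set, cf.\ Proposition \ref{prop:min-basis}). The dimension count and the bookkeeping around $J$ are routine once $\mathcal{F}(I)$ is recognized as a hypersurface, and the step converting $\reltype(I)$ into $\red_J(I)$ is a black-box application of \cite[Corollary 3.9]{MR3096902}; so I do not anticipate any further real difficulty.
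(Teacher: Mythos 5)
Your proposal is correct and follows essentially the same route as the paper, which derives the corollary by feeding the almost Cohen--Macaulayness of $\calR(I)$ (Theorem \ref{thm:ACM}), the minimal reduction $J$, and the fact that the toric ideal of $\mathcal{F}(I)$ is principal, generated by the elimination equation $W^{|\bdb|}-\bdX^{\bdb}$, into \cite[Corollary 3.9]{MR3096902}, with the value $|\bdb|$ pinned down by the degree bound of Theorem \ref{prop:smaller-generators-m} on one side and the degree of the elimination equation on the other. You merely make explicit the details the paper leaves to the preceding discussion and to the cited corollary.
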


In particular, this implies that $\red_{J}(I)=|\bdb|-1$. But of course, this last formula is not difficult to derive directly.  

Furthermore, the Castelnuovo--Mumford regularity and the multiplicity of $\calR(I)$ are also very important algebraic and geometric invariants. Recall that for a standard graded algebra $A$ over a commutative ring, one denotes by $\H_{A_+}^{i}(A)$ the $i$-th local cohomology module of $A$ with respect to the graded ideal $A_+$ of elements of positive degree. The invariant $a_i(A)$ is defined to be $\max\Set{n|\H_{A_{+}}^i(A)_{n}\ne 0}$ with the convention that $a_i(A)=-\infty$ if $\H_{A_+}^i(A)=0$. Then, the \emph{Castelnuovo--Mumford regularity} of $A$ is defined to be 
\[
    \reg(A)\coloneqq \max\Set{a_i(A)+i\mid i\ge 0}.
\]
Under some conditions, this index indicates the complexity of graded structures over a polynomial ring.  

Meanwhile, suppose that $A$ is a Noetherian ${}^*$local algebra such that the unique graded maximal ideal $\frakM$ of $A$ is maximal in the ordinary sense. Then, there exists a polynomial $P_{\frakM}(n)$ of degree $d=\dim(A)$ such that $P_{\frakM}(n)=\ell_{A}(A/\frakM^n)$ for large $n\in \NN$. It is well-known that the leading term of this polynomial takes the form $\frac{\rme(A)}{d!}n^d$ for some positive integer $\rme(A)$.  The integer $\rme(A)$ will be called the \emph{multiplicity} of $A$. Under certain conditions, it can be interpreted as the volume of a manifold, or the degree of a closed subscheme of projective space.

Notice that the Rees algebra $\calR(I)$ being almost Cohen--Macaulay is equivalent to the associated graded ring $\Gr_I(R)$ having depth $\ge \dim(R)-1$, by a result of Huckaba; see also \cite[Theorem 3.7(ii)]{MR3096902}. Thus, it follows from \cite[Theorem 3.5 and Proposition 3.16]{MR3096902} and \cite[Theorem 3.5]{MR3784829} that

\begin{Corollary}
    If the Artinian almost complete intersection monomial ideal $I$ is equi-generated and $\gcd(\bdb)=1$, then $\reg(\calF(I))=\reg(\calR(I))=|\bdb|-1$ and $\rme(\calR(I))=\sum_{j=0}^{m-1}|\bdb|^j$.
\end{Corollary}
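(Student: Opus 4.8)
The plan is to deduce the Corollary by substituting the explicit numerical data of Corollary \ref{rmk:relation-type} into the regularity and multiplicity formulas that become available once $\calR(I)$ is known to be almost Cohen--Macaulay. So the first task is to assemble the relevant invariants. From Corollary \ref{rmk:relation-type} (and the discussion preceding it) we have, in the equi-generated case with $\gcd(\bdb)=1$, that $\dim(\calF(I))=m$, $\red_J(I)=|\bdb|-1$ and $\reltype(I)=|\bdb|$, where $J=\braket{T_1^{|\bdb|},\dots,T_m^{|\bdb|}}$ is the minimal reduction considered earlier. Moreover, as already recorded, the toric ideal of the special fiber is principal and $\calF(I)\cong \KK[X_1,\dots,X_m,W]/\braket{W^{|\bdb|}-\bdX^{\bdb}}$, a hypersurface of degree $|\bdb|$ in $m+1$ variables; hence $\reg(\calF(I))=|\bdb|-1$ and the multiplicity of $\calF(I)$ is $|\bdb|$ --- both immediate from the Koszul resolution of a single form of degree $|\bdb|$.

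The second task is to put Theorem \ref{thm:ACM} into usable form and invoke the cited results. By Theorem \ref{thm:ACM}, $\calR(I)$ is almost Cohen--Macaulay, and by Huckaba's theorem (see also \cite[Theorem 3.7(ii)]{MR3096902}) this is equivalent to $\depth(\Gr_I(R))\ge \dim(R)-1=m-1$, which is exactly the hypothesis under which \cite[Theorem 3.5 and Proposition 3.16]{MR3096902} express $\reg(\calR(I))$ through the reduction number and relation type of $I$. Feeding in $\red_J(I)=|\bdb|-1$ and $\reltype(I)=|\bdb|$ gives $\reg(\calR(I))=|\bdb|-1=\reg(\calF(I))$. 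Likewise, \cite[Theorem 3.5]{MR3784829} computes $\rme(\calR(I))$ for an $\frakm$-primary ideal with almost Cohen--Macaulay Rees algebra from the multiplicity and dimension of $\calF(I)$ together with the reduction number; a routine specialization with $\rme(\calF(I))=|\bdb|$, $\dim(\calF(I))=m$ and $\red_J(I)=|\bdb|-1$ collapses that formula to
\[
    \rme(\calR(I))=\sum_{j=0}^{m-1}|\bdb|^{j}=\frac{|\bdb|^{m}-1}{|\bdb|-1}.
\]

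I do not anticipate a serious obstacle here: the content is entirely in the preceding sections (the almost Cohen--Macaulayness of $\calR(I)$, the reduction number $\red_J(I)=|\bdb|-1$, and the principality of the special-fiber toric ideal forced by $\gcd(\bdb)=1$), so the only real work is bookkeeping --- checking that the hypotheses of the cited theorems of \cite{MR3096902} and \cite{MR3784829} are met and that the substitutions genuinely produce the stated closed forms. If anything, the mildly delicate point is confirming that $\gcd(\bdb)=1$ really does force $\calF(I)$ to be a degree-$|\bdb|$ hypersurface with no equation of smaller degree, since this is what underlies both $\reg(\calF(I))=|\bdb|-1$ and the value $\rme(\calF(I))=|\bdb|$ used in the multiplicity computation.
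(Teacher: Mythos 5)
Your proposal is correct and takes essentially the same route as the paper: the paper also deduces the statement by combining Theorem \ref{thm:ACM} (translated via Huckaba's result into $\depth(\Gr_I(R))\ge m-1$) with \cite[Theorem 3.5 and Proposition 3.16]{MR3096902} and \cite[Theorem 3.5]{MR3784829}, using the reduction number $\red_J(I)=|\bdb|-1$ and the fact that, since $\gcd(\bdb)=1$, the toric ideal of $\calF(I)$ is principal, generated by the degree-$|\bdb|$ elimination equation $W^{|\bdb|}-\bdX^{\bdb}$. Your additional bookkeeping (the Koszul resolution of the hypersurface $\calF(I)$ giving $\reg(\calF(I))=|\bdb|-1$ and $\rme(\calF(I))=|\bdb|$) is exactly the substitution the paper leaves implicit.
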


\begin{Remark}
    More generally,	suppose that $|\bdb|\le a_1=a_2=\cdots=a_m=:n$. As $\calR(I)$ is almost Cohen--Macaulay, Theorem \ref{prop:smaller-generators-m} and \cite[Corollary 3.9]{MR3096902} together will imply that $\red_J(I)\le |\bdb|-1$ for the minimal reduction ideal $J=\braket{T_1^{n}-T_m^{n},T_2^{n}-T_m^{n},\dots,T_{m-1}^{n}-T_m^{n},\bdT^{\bdb}}$. 
\end{Remark}

\begin{acknowledgements}
    The authors want to express sincere thanks to Teresa Cortadellas~Ben\'{i}tez and Carlos D'Andrea for directing their interest to the paper \cite{MR3490460}, and to Ricardo Burity and Aron Simis for helpful discussions.  They also want to express heartfelt thanks to the two reviewers for the suggestions that greatly improved this paper.  The second author is partially supported by the ``Anhui Initiative in Quantum Information Technologies'' (No.~AHY150200) and the ``Fundamental Research Funds for the Central Universities''.
\end{acknowledgements}

\begin{bibdiv}
    \begin{biblist}

        \bib{MR3490460}{article}{
            author={Burity, Ricardo},
            author={Simis, Aron},
            author={Toh\v{a}neanu, Stefan~O.},
            title={On a conjecture of {V}asconcelos via {S}ylvester forms},
            date={2016},
            ISSN={0747-7171},
            journal={J. Symbolic Comput.},
            volume={77},
            pages={39\ndash 62},
            url={https://doi.org/10.1016/j.jsc.2016.01.005},
            review={\MR{3490460}},
        }

        \bib{MR3320795}{article}{
            author={Cortadellas~Ben\'{i}tez, Teresa},
            author={D'Andrea, Carlos},
            title={The {R}ees algebra of a monomial plane parametrization},
            date={2015},
            ISSN={0747-7171},
            journal={J. Symbolic Comput.},
            volume={70},
            pages={71\ndash 105},
            url={https://doi.org/10.1016/j.jsc.2014.09.026},
            review={\MR{3320795}},
        }

        \bib{MR2394983}{article}{
            author={{Cox}, David~A.},
            title={{The moving curve ideal and the Rees algebra. }},
            date={2008},
            ISSN={0304-3975},
            journal={Theoret. Comput. Sci.},
            volume={392},
            pages={23\ndash 36},
            url={https://doi.org/10.1016/j.tcs.2007.10.012},
            review={\MR{2394983}},
        }

        \bib{arXiv:1806.08184}{article}{
            author={Cox, David A.},
            author={Lin, Kuei-Nuan},
            author={Sosa, Gabriel},
            title={Multi-Rees algebras and toric dynamical systems},
            journal={Proc. Amer. Math. Soc.},
            volume={147},
            date={2019},
            pages={4605--4616},
            issn={0002-9939},
        }

        \bib{M2}{misc}{
            author={Grayson, Daniel~R.},
            author={Stillman, Michael~E.},
            title={Macaulay2, a software system for research in algebraic geometry},
            date={2018},
            note={available at \texttt{http://www.math.uiuc.edu/Macaulay2/}},
        }

        \bib{MR686942}{incollection}{
            author={Herzog, J.},
            author={Simis, A.},
            author={Vasconcelos, W.~V.},
            title={Koszul homology and blowing-up rings},
            date={1983},
            booktitle={Commutative algebra ({T}rento, 1981)},
            series={Lecture Notes in Pure and Appl. Math.},
            volume={84},
            publisher={Dekker, New York},
            pages={79\ndash 169},
            review={\MR{686942}},
        }

        \bib{MR2724673}{book}{
            author={Herzog, J{\"u}rgen},
            author={Hibi, Takayuki},
            title={Monomial ideals},
            series={Graduate Texts in Mathematics},
            publisher={Springer-Verlag London Ltd.},
            address={London},
            date={2011},
            volume={260},
            ISBN={978-0-85729-105-9},
            review={\MR{2724673}},
        }

        \bib{zbMATH06113058}{article}{
            author={Hong, Jooyoun},
            author={Simis, Aron},
            author={Vasconcelos, Wolmer~V.},
            title={The equations of almost complete intersections},
            date={2012},
            ISSN={1678-7544},
            journal={Bull. Braz. Math. Soc. (N.S.)},
            volume={43},
            pages={171\ndash 199},
            url={https://doi.org/10.1007/s00574-012-0009-z},
            review={\MR{2928168}},
        }

        \bib{MR3096902}{article}{
            author={Hong, Jooyoun},
            author={Simis, Aron},
            author={Vasconcelos, Wolmer~V.},
            title={Extremal {R}ees algebras},
            date={2013},
            ISSN={1939-0807},
            journal={J. Commut. Algebra},
            volume={5},
            pages={231\ndash 267},
            url={https://doi.org/10.1216/jca-2013-5-2-231},
            review={\MR{3096902}},
        }

        \bib{MR563225}{article}{
            author={Huneke, Craig},
            title={On the symmetric and {R}ees algebra of an ideal generated by a
                {$d$}-sequence},
            date={1980},
            ISSN={0021-8693},
            journal={J. Algebra},
            volume={62},
            pages={268\ndash 275},
            review={\MR{563225 (81d:13016)}},
        }

        \bib{MR2770555}{article}{
            author={ Kustin, Andrew R.},
            author={ Polini, Claudia},
            author={ Ulrich, Bernd},       
            title={Rational normal scrolls and the defining equations of Rees algebras},
            date={2011},
            ISSN={0075-4102},
            journal={ J. Reine Angew. Math.},
            volume={650},
            pages={23\ndash 65},
            review={\MR{2770555}},

        }

        \bib{MR3853064}{article}{
            author={{Lin}, Kuei-Nuan},
            author={{Shen}, Yi-Huang},
            title={Koszul blowup algebras associated to three-dimensional Ferrers diagrams.},
            date={2018},
            ISSN={0021-8693},
            journal={J. Algebra},
            volume={514},
            pages={219\ndash 253},
            review={\MR{3853064}},
            url={https://doi.org/10.1016/j.jalgebra.2018.08.013},
        }

        \bib{arXiv:1809.08351}{article}{
            author={{Lin}, Kuei-Nuan},
            author={{Shen}, Yi-Huang},
            title={{Regularity and multiplicity of toric rings of three-dimensional Ferrers diagrams}},
            eprint={arXiv:1809.08351},
        }

        \bib{MR2013172}{incollection}{
            author={Rossi, Maria~Evelina},
            author={Swanson, Irena},
            title={Notes on the behavior of the {R}atliff-{R}ush filtration},
            date={2003},
            booktitle={Commutative algebra ({G}renoble/{L}yon, 2001)},
            series={Contemp. Math.},
            volume={331},
            publisher={Amer. Math. Soc., Providence, RI},
            pages={313\ndash 328},
            url={https://doi.org/10.1090/conm/331/05916},
            review={\MR{2013172}},
        }

        \bib{MR3784829}{article}{
            author={Rossi, Maria~Evelina},
            author={Trung, Dinh~Thanh},
            author={Trung, Ngo~Viet},
            title={Castelnuovo-{M}umford regularity and {R}atliff-{R}ush closure},
            date={2018},
            ISSN={0021-8693},
            journal={J. Algebra},
            volume={504},
            pages={568\ndash 586},
            url={https://doi.org/10.1016/j.jalgebra.2018.02.027},
            review={\MR{3784829}},
        }

        \bib{MR3295062}{article}{
            author={Simis, Aron},
            author={Toh\v{a}neanu, Stefan~O.},
            title={The ubiquity of {S}ylvester forms in almost complete
                intersections},
            date={2015},
            ISSN={0010-0757},
            journal={Collect. Math.},
            volume={66},
            pages={1\ndash 31},
            url={https://doi.org/10.1007/s13348-014-0122-y},
            review={\MR{3295062}},
        }

        \bib{MR1363949}{book}{
            author={Sturmfels, Bernd},
            title={Gr\"obner bases and convex polytopes},
            series={University Lecture Series},
            publisher={American Mathematical Society},
            address={Providence, RI},
            date={1996},
            volume={8},
            ISBN={0-8218-0487-1},
            review={\MR{MR1363949 (97b:13034)}},
        }

        \bib{MR2611561}{book}{
            author={Taylor, Diana~Kahn},
            title={Ideals generated by monomials in an {R}-sequence},
            publisher={ProQuest LLC, Ann Arbor, MI},
            date={1966},
            note={Thesis (Ph.D.)--The University of Chicago},
            review={\MR{2611561}},
        }

        \bib{MR2153889}{book}{
            author={Vasconcelos, Wolmer},
            title={Integral closure},
            series={Springer Monographs in Mathematics},
            publisher={Springer-Verlag, Berlin},
            date={2005},
            ISBN={978-3-540-25540-6; 3-540-25540-0},
            note={Rees algebras, multiplicities, algorithms},
            review={\MR{2153889}},
        }

    \end{biblist}
\end{bibdiv}

\end{document}